\let\oldsection\section
\renewcommand{\section}{%
    \setcounter{equation}{0}%
    \oldsection%
}
\newcommand{\be}{\begin{eqnarray}}
\newcommand{\ee}{\end{eqnarray}}
\newcommand{\bSubE}{\begin{subequations}}
\newcommand{\eSubE}{\end{subequations}}
\newcommand{\bN}{\begin{enumerate}}
\newcommand{\eN}{\end{enumerate}}
\newcommand{\bNAlph}{\begin{enumerate}}
\newcommand{\eNAlph}{\end{enumerate}}
\newcommand{\bI}{\begin{itemize}}
\newcommand{\eI}{\end{itemize}}
\newcommand{\ba}{\begin{array}}
\newcommand{\ea}{\end{array}}
\newcommand{\R}{\mathbb{R}}
\newtheorem{theorem}{Theorem}
\newtheorem{proposition}{Proposition}
\newtheorem{corollary}{Corollary}
\newtheorem{lemma}{Lemma}
\theoremstyle{plain}
\theoremstyle{definition}
\newtheorem{definition}{Definition}
\newtheorem{example}{Example}
\newcommand{\promille}{%
    \relax\ifmmode\promillezeichen
          \else\leavevmode\(\mathsurround=0pt\promillezeichen\)\fi}
  \newcommand{\promillezeichen}{%
    \kern-.05em%
    \raise.5ex\hbox{\the\scriptfont0 0}%
    \kern-.15em/\kern-.15em%
    \lower.25ex\hbox{\the\scriptfont0 00}}
\title{A Generalized Representation of Faá di Bruno's Formula Using Multivariate Bell Polynomials}
\author{Michael P. Evers\thanks{Department of Economics, University of Hohenheim, Germany. E-mail: michael.evers@uni-hohenheim.de}\;  and Markus Kontny\thanks{Corresponding Author: Deutsche Bundesbank, Germany. E-mail: markus.kontny@bundesbank.de. The views expressed in this paper are not necessarily those of the
Deutsche Bundesbank or the Eurosystem.}}
\date{\today}
\begin{document}
%
\maketitle
\begin{abstract} We provide a novel representation of the total n-th derivative of the multivariate composite function $f \circ g$, i.e. a generalized Faà di Bruno's formula. To this end, we make use of properties of the Kronecker product and the n-th derivative of the left-composite $f$, which allow the use of a multivariate form of partial Bell polynomials to represent the generalized Faà di Bruno's formula. We further show that standard recurrence relations that hold for the univariate partial Bell polynomial also hold for the multivariate partial Bell polynomial under a simple transformation. We apply this generalization of Faà di Bruno's formula to the computation of multivariate moments of the normal distribution.
\end{abstract}
%
\section{Introduction}
In this paper we set out to construct an easy to handle representation of higher total derivatives of composite functions which might be vector-valued and multivariate. We generalize the well-known Faà Di Bruno's formula in a way which is identical in structure to the univariate case. Specifically, we provide a parsimonious yet valuable representation for the n-th total derivative using a multivariate generalization of partial Bell polynomials:
\begin{equation}
d^{n} f(g(x)) \label{eq:start}
= \left( \sum_{k=1}^{n} f_{g^{k}}  \textbf{B}^g_{n,k}\right)  (dx)^{\otimes n},
\end{equation}
where $\textbf{B}^g_{n,k}$ is identical in structure to the univariate partial Bell polynomial over the respective matrices $\left(g_x, g_{x^2},\ldots,g_{x^{n-k+1}}\right)$ containing all partial derivatives of $g$ w.r.t $x$ at all orders $l=1,2,...,n-k+1$ of sizes $n_{y}\times n_{x}^{l}$, with scalar multiplication substituted by the Kronecker product $\otimes$ and with $ f_{g^{k}}$ being a $n_f \times n_y^k$ matrix collecting all k-th order partial derivatives of $f$ w.r.t $g$.\footnote{Representing all k-th order partial derivatives in one vector or matrix instead of using multi-indices or tensors is crucial for this representation. For example, such a representation has been advanced in \cite{holmquist_d-variate_1996} and \cite{magnus_concept_2010}.}

Faà Di Bruno's formula for the univariate case is a well-known result. An overview of its history can be found in \cite{johnson2002curious}. Multiple papers generalize Faà Di Bruno's formula for vector-valued and multivariate composite $f \circ g$ to obtain higher order partial derivatives. Examples for multivariate cases can be found in \cite{gzyl_multidimensional_1986}, \cite{constantine_multivariate_1996},  \cite{hardy_combinatorics_2006} and \cite{leipnik_multivariate_2007}. However, they do not employ Bell polynomials to express the combinatorial structure of the higher order composite derivatives. \cite{noschese2003differentiation} and \cite{schumann_multivariate_2019} make use of Bell polynomials to derive higher order partial derivatives and \cite{di2011new} applies classical umbra calculus. A generalization of Faà Di Bruno's formula to obtain rather the n-th total derivative are provided by \cite{mishkov_generalization_2000} and for the matrix of n-th order derivatives by \cite{chacon_higher_2021}. However, both representations to do not exploit the powerful properties of Bell polynomials.

In this paper, we offer a generalized representation of the n-th total derivative for a vector-valued and multivariate composite $f \circ g$ that takes advantage of both the properties of Bell polynomials as well as the Kronecker product. Firstly, we thereby systematically collect all higher order partial derivatives in a novel multivariate Bell Polynomial and make it thus directly applicable to compute e.g. Taylor series expansions. Secondly, the novel multivariate partial Bell polynomials inherit the common recurrence relationships familiar from the univariate case which leads us directly to apply them to a multivariate Faà Di Bruno's formula. Thirdly, we apply this novel representation to the computation of arbitrary moments of the multivariate normal distribution.

We start by recalling the univariate partial Bell polynomial and present a proof of the univariate version of Faà di Bruno's formula using induction in Section 2. This section serves as an expository and pedagogical guideline, since these results are already well-known in the univariate case, but our proof for the multivariate version works along similar lines. In Section 3, we introduce the notation for matrix-valued derivatives and present our multivariate version of the partial Bell polynomial. In this section, we also lay out the issues when attempting to use the multivariate partial Bell polynomial and how these can be resolved. In Section 4, we show, on the basis of commutation properties of the Kronecker product, that the familiar recurrence relations for partial Bell polynomials also hold in the multivariate case. We finally prove the generalized version of Faà Di Bruno's formula in Section 5. In Section 6, we apply the generalization to the computation of moments of the multivariate normal distribution. We add some concluding remarks in Section 7.
\\\\
\textbf{Notation and Definitions:}
Where it is not defined differently, functions $f(y)$ and $g(x)$ are vectors of size $n_f \times 1$ and $n_y \times 1$, respectively, given by $y = g\left(x\right)$ with $f: \R^{n_y} \rightarrow \R^{n_f}$ and $g: \R^{n_x} \rightarrow \R^{n_y}$\footnote{Obviously $f,g$ can be defined on more narrow domains as long as their are sufficiently often differentiable on these.}. Both $f$ and $g$ are at least n times differentiable. Further let $x=\left(x_1,\ldots,x_{n_x}\right)'$ and $y=\left(y_1,\ldots,y_{n_y}\right)'$ be column vectors. $\mathcal{M}_{r \times c}\left(\mathbb{R}\right)$ represents the set of all matrices over the real numbers with $r$ rows and $c$ columns. $I_n$ denotes the identity matrix of size $n \times n$. $Sym(m)$ is the symmetric group over the set of elements given by $\left\{ 1,\ldots m\right\}$ and $\sigma \in Sym(m)$ represents a permutation defined over these $m$ elements.

\section{Univariate Faà Di Bruno}
To arrive at some intuition and guidance for the proof of the multivariate Faà Di Bruno's formula, we start by providing a proof of the univariate case. We define the partial Bell polynomial\footnote{For reference see for example \cite{comtet1974advanced}.} over the function $g$ as follows
\begin{definition}[Univariate Partial Bell Polynomial]
For some $n,k \in \mathbb{N}$ with $n \geq k$ and a function $g:\R \rightarrow \R$ which is at least $n-k+1$ times differentiable on some domain $D \subset \R$, we can define the partial Bell polynomial by
\begin{align}\label{Bell-univariate}
\begin{array}{l}
B_{n,k}(g_{x}(x),g_{x^{2}}(x),\cdots,g_{x^{(n - k + 1)}}(x))\\[1.5ex]
= \sum {n! \over j_{1}!j_{2}!\cdots j_{n-k+1}!}\left({g_{x}(x) \over 1!}\right)^{j_{1}}\left({g_{x^{2}}(x) \over 2!}\right)^{j_{2}}\cdots \left({g_{x^{(n - k + 1)}}(x) \over (n-k+1)!}\right)^{j_{n-k+1}},
\end{array}
\end{align}
where $\left(g_x, g_{x^2},\ldots,g_{x^{n-k+1}}\right)$ denote the univariate partial derivatives of $g$ w.r.t $x$ at all orders $i=1,2,...,n-k+1$, and where the sum is taken over all sequences $j_{1}$, $j_{2}$, $j_{3}$,...., $j_{n-k+1}$ of non-negative integers such that these two conditions are satisfied:
\begin{align}\label{Bell-Conditions-uni}
\begin{array}{cc}
j_{1}+j_{2}+\cdots +j_{n-k+1}&=k\\[1.5ex]
j_{1}+2j_{2}+3j_{3}+\cdots +(n-k+1)j_{n-k+1}&=n.
\end{array}
\end{align}
To ease notation, we set
\begin{equation}
B^g_{n,k} := B_{n,k}(g_{x}(x),g_{x^{2}}(x),\cdots,g_{x^{(n - k + 1)}}(x)).
\end{equation}
Further it holds that $B^g_{n,k} = 0$ for $k>n$, $n = 0$, and $k=0$.
\end{definition}
Before we proceed we require the following Lemma:
\begin{lemma}[Differentiation Recurrence of Univariate Partial Bell Polynomials]
\label{lemma:bellrecur}
For any scalars $n, k$ and functions $g$ as defined above, it holds that
\begin{equation}\label{eq:recrel}
B^g_{n+1,k} = g_x(x) B^g_{n,k-1} + \frac{d}{d x} B^g_{n,k}.
\end{equation}
\end{lemma}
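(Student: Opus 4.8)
The plan is to prove \eqref{eq:recrel} by differentiating the explicit sum in \eqref{Bell-univariate} directly and then matching, monomial by monomial, the resulting polynomial in the derivatives of $g$ against the two terms on the right-hand side. Throughout I write $g_l := g_{x^l}(x)$ and use only the single fact that $\tfrac{d}{dx}g_l = g_{l+1}$; this turns every object appearing in \eqref{eq:recrel} into a polynomial in the $g_l$ with constant coefficients, so it suffices to verify that the coefficient of an arbitrary monomial $M := \prod_l g_l^{m_l}$ agrees on both sides. All degenerate cases are absorbed by the stated conventions $B^g_{n,k}=0$ for $k>n$, $n=0$, or $k=0$.

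First I would record which monomials occur. A term of $B^g_{n+1,k}$ is indexed by a sequence $(m_1,m_2,\dots)$ of nonnegative integers subject to $\sum_l m_l = k$ and $\sum_l l\,m_l = n+1$, and it carries the coefficient $\tfrac{(n+1)!}{\prod_l m_l!\,\prod_l (l!)^{m_l}}$ in front of $M$. These two constraints (total count $=k$, weight $=n+1$) single out exactly the monomials I must account for, and the goal is to show that each such $M$ receives this same coefficient from the sum $g_x B^g_{n,k-1} + \tfrac{d}{dx}B^g_{n,k}$.

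Next I would compute the two contributions on the right separately. For $g_x B^g_{n,k-1} = g_1 B^g_{n,k-1}$, the monomial $M$ can arise only as $g_1$ times the monomial $M/g_1$ of $B^g_{n,k-1}$, so it requires $m_1\ge 1$; reindexing $m_1\mapsto m_1-1$ (which sends the constraints to count $=k-1$, weight $=n$, as needed) shows its coefficient is $\tfrac{n!\,m_1}{\prod_l m_l!\,\prod_l (l!)^{m_l}}$. For $\tfrac{d}{dx}B^g_{n,k}$ the product rule promotes one factor $g_i$ to $g_{i+1}$ with multiplicity equal to its exponent, so $M$ is produced from every monomial of $B^g_{n,k}$ of the form $M\cdot g_i/g_{i+1}$ with $m_{i+1}\ge 1$. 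Collecting the factorial and $(l!)$ factors after the shift $m_i\mapsto m_i+1,\ m_{i+1}\mapsto m_{i+1}-1$, each such term contributes $\tfrac{n!}{\prod_l m_l!\,\prod_l (l!)^{m_l}}\,(i+1)\,m_{i+1}$, and summing over $i$ gives $\tfrac{n!}{\prod_l m_l!\,\prod_l (l!)^{m_l}}\sum_{j\ge 2} j\,m_j = \tfrac{n!\,(n+1-m_1)}{\prod_l m_l!\,\prod_l (l!)^{m_l}}$, where the last equality uses the weight constraint $\sum_l l\,m_l = n+1$. Adding the two contributions yields $\tfrac{n!\,(m_1 + n + 1 - m_1)}{\prod_l m_l!\,\prod_l (l!)^{m_l}} = \tfrac{(n+1)!}{\prod_l m_l!\,\prod_l (l!)^{m_l}}$, exactly the coefficient of $M$ in $B^g_{n+1,k}$.

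The \emph{main obstacle} is the bookkeeping in the derivative term: one must carry the multiplicity factor generated by the product rule, perform the index shift $g_i\to g_{i+1}$ correctly, and check that the ratio of factorial and power factors collapses to the clean factor $(i+1)\,m_{i+1}$, so that the weighted sum $\sum_{j\ge 2} j\,m_j$ telescopes against the weight constraint to supply precisely the missing $n+1-m_1$. Once this cancellation is established, the boundary-sensitive situations (small $k$, or exponents that would demand an out-of-range derivative order) are handled automatically by the zero conventions, and the recurrence follows. As a sanity check on the structure, I would note that the computation is equivalently organized through the chain rule $\tfrac{d}{dx}B^g_{n,k} = \sum_{i\ge 1}\tfrac{\partial B^g_{n,k}}{\partial g_i}\,g_{i+1}$ together with the reindexing identity $\tfrac{\partial B^g_{n,k}}{\partial g_i} = \binom{n}{i}B^g_{n-i,k-1}$, which is itself obtained by the same shift $m_i\mapsto m_i-1$.
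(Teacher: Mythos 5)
Your proof is correct, but it takes a genuinely different route from the paper's. The paper treats two classical recurrences for partial Bell polynomials, $B_{n,k}^g = \sum_{i=1}^{n-k+1}\binom{n-1}{i-1} g_{x^i}B^g_{n-i,k-1}$ and $\frac{d}{dx}B^g_{n,k} = \sum_{i=1}^{n-k+1}\binom{n}{i} g_{x^{i+1}}B^g_{n-i,k-1}$, as citable black boxes, substitutes the second into the right-hand side of \eqref{eq:recrel}, and finishes with a single index shift that reproduces the first relation evaluated at $n+1$. You instead argue from the definition \eqref{Bell-univariate} alone and match coefficients monomial by monomial; I verified your bookkeeping: the factor $m_1$ arising from $g_x B^g_{n,k-1}$, the factor $(i+1)\,m_{i+1}$ surviving the factorial cancellations in the derivative term, and the telescoping $\sum_{j\ge 2} j\,m_j = n+1-m_1$ against the weight constraint all combine to give $\frac{(n+1)!}{\prod_l m_l!\,\prod_l (l!)^{m_l}}$, which is indeed the coefficient in $B^g_{n+1,k}$, and the exponent shifts preserve the two side conditions so no out-of-range monomials appear. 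What each approach buys: yours is fully self-contained and proves the combinatorial content from scratch, at the cost of heavier notation; the paper's is shorter and cleaner but outsources the substance to the cited recurrences. In fact your closing sanity check, $\frac{\partial B^g_{n,k}}{\partial g_i} = \binom{n}{i}B^g_{n-i,k-1}$ combined with the chain rule, is exactly the paper's second cited recurrence \eqref{eq:recursion2}, so your argument can be read as supplying a proof of the very ingredient the paper only quotes.
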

\begin{proof}
The result follows from two well known recurrence relations for the partial Bell polynomial\footnote{See for example \cite{charalambides2018enumerative}.}
\begin{align}
B_{n,k}^g &= \sum_{i = 1}^{n-k+1} {n-1 \choose i-1} g_{x^i} B_{n-i,k-1}^g \label{eq:recursion1}\\
\frac{d}{dx} B_{n,k}^g &= \sum_{i = 1}^{n-k+1} {n \choose i } g_{x^{i+1}} B_{n-i,k-1}^g \label{eq:recursion2}
\end{align}
Starting from the right-hand-side (RHS) of \eqref{eq:recrel} and using \eqref{eq:recursion2} we can see that
\begin{align}
 &\quad g_x B^g_{n,k-1} + \frac{d}{d x} B^g_{n,k} \\
 =&\quad  g_x  B^g_{n,k-1} + \sum_{i = 1}^{n-k+1} {n \choose i } g_{x^{i+1}} B_{n-i,k-1}^g\\
  =&\quad  g_x B^g_{n,k-1} + \sum_{i = 2}^{n-k+2} {n \choose i - 1} g_{x^{i}} B_{n-i+1,k-1}^g\\
    =&\quad  g_x B^g_{n,k-1} -  g_x B_{n,k-1}^g + \sum_{i = 1}^{n-k+2} {n \choose i - 1} g_{x^{i}} B_{n-i+1,k-1}^g \\
        =&\quad \sum_{i = 1}^{n-k+2} {n \choose i - 1} g_{x^{i}} B_{n-i+1,k-1}^g. \label{eq:recrel_last}
    \end{align}
By setting $n := n+1$ in equation \eqref{eq:recursion1} and comparing to \eqref{eq:recrel_last}, the proof is complete.
\end{proof}

Using Lemma 1 it is now straightforward to prove the univariate and scalar-valued case of Faà Di Bruno by induction:

\begin{theorem}[Univariate Faà Di Bruno's Formula Using Bell Polynomials]
For some $n,k \in \mathbb{N}$ with $n \geq k$ and function $g:\R \rightarrow \R$ which is at least $n-k+1$ differentiable on some domain $D_g \subset \R$ and a function $f:\R \rightarrow\R$ which is at least $n$ times differentiable on some domain $D_f \subset \R$, the $n$-th order composite derivative of $f \circ g$ is given by the Faà Di Bruno's formula:
\begin{align}\label{FaaDiBruno-Bell-univariate}
\frac{d^{n}}{d x^{n}} f(g(x))
= \sum_{k=1}^{n} f_{g^{k}}(g(x)) B^g_{n,k}.
\end{align}
\end{theorem}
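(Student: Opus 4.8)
The plan is to prove the formula by induction on $n$, with Lemma \ref{lemma:bellrecur} (the differentiation recurrence for the partial Bell polynomials) supplying exactly the algebraic identity needed to close the induction. For the base case $n=1$, I would simply observe that the ordinary chain rule gives $\frac{d}{dx} f(g(x)) = f_{g}(g(x)) g_x(x)$, and since $B^g_{1,1} = g_x$, this matches the single-term sum $\sum_{k=1}^{1} f_{g^k}(g(x)) B^g_{1,k}$.

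For the inductive step, I would assume the formula holds at order $n$ and differentiate both sides once more. Applying the product rule to each summand $f_{g^k}(g(x)) B^g_{n,k}$ yields two sums: one in which the derivative falls on $f_{g^k}(g(x))$, and one in which it falls on $B^g_{n,k}$. In the first of these, the chain rule gives $\frac{d}{dx} f_{g^k}(g(x)) = f_{g^{k+1}}(g(x)) g_x(x)$, so that after the shift of index $k \mapsto k+1$ this sum becomes $\sum_{k=2}^{n+1} f_{g^{k}}(g(x)) g_x(x) B^g_{n,k-1}$; the second sum is $\sum_{k=1}^{n} f_{g^k}(g(x)) \frac{d}{dx} B^g_{n,k}$.

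The key step is then to recombine these into a single sum matching the target $\sum_{k=1}^{n+1} f_{g^k}(g(x)) B^g_{n+1,k}$. To do so I would first align the two summation ranges using the boundary conventions stated in the definition: since $B^g_{n,0} = 0$ the first sum may be extended down to $k=1$, and since $B^g_{n,k} = 0$ for $k>n$ the second sum may be extended up to $k = n+1$. Both sums then run over $k = 1, \ldots, n+1$, and combining the $k$-th summands gives $f_{g^k}(g(x)) \left( g_x(x) B^g_{n,k-1} + \frac{d}{dx} B^g_{n,k} \right)$, whose parenthesized factor is exactly $B^g_{n+1,k}$ by Lemma \ref{lemma:bellrecur}. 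This produces $\sum_{k=1}^{n+1} f_{g^k}(g(x)) B^g_{n+1,k}$ and completes the induction.

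I expect the only real bookkeeping hurdle to be the boundary-index matching: verifying that the extra terms introduced when extending the two summation ranges (the $k=1$ term in the first sum and the $k=n+1$ term in the second) genuinely vanish under the conventions $B^g_{n,0}=0$ and $B^g_{n,k}=0$ for $k>n$, so that Lemma \ref{lemma:bellrecur} can be applied uniformly across the full range. Everything else is routine differentiation via the product and chain rules.
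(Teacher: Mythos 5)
Your proposal is correct and follows essentially the same route as the paper: induction on $n$, the chain-rule base case, product/chain rule in the inductive step, extension of the summation ranges via the conventions $B^g_{n,0} = B^g_{n,n+1} = 0$, and closure of the induction by the recurrence $B^g_{n+1,k} = g_x(x) B^g_{n,k-1} + \frac{d}{dx} B^g_{n,k}$ from Lemma \ref{lemma:bellrecur}. The boundary-index bookkeeping you flag as the only hurdle is handled in the paper exactly as you describe, so there is no gap.
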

\begin{proof} We prove the univariate case by induction. Start of induction for $n = 1$ gives
\begin{align}
\frac{d}{d x} f(g(x))
&= \sum_{k=1}^{1} f_{g^{k}}(g(x))  B_{n,k}^g\\
&= f_g\left(g(x)\right) B_{1,1}^g \\
& = f_g\left(g(x)\right) g_x(x),
\end{align}
which follows directly from the chain rule.
Now suppose \eqref{FaaDiBruno-Bell-univariate} holds for some arbitrary $n \in \mathbb{N}$.
Then we can write
\begin{align}
\frac{d^{n+1}}{d x^{n+1}} f(g(x))
&= \frac{d}{d x} \left[\frac{d^{n}}{d x^{n}} f(g(x))\right]\\
&= \frac{d}{d x} \left[\sum_{k=1}^{n} f_{g^{k}}(g(x)) B^g_{n,k}\right]\\
&= \sum_{k=1}^{n} f_{g^{k+1}}\left(g(x)\right) g_x(x) B^g_{n,k} + f_{g^k}\left(g(x)\right) \frac{d}{d x} B^g_{n,k}.
\end{align}
We can factor for $f_{g^{k}}\left(g(x)\right)$ after re-parameterizing the sums and noting that $B^g_{n,0}\left(\cdot\right) = B^g_{n,n+1}\left(\cdot\right) = 0$.
\begin{align}
&=\sum_{k=1}^{n+1} f_{g^{k}}\left(g(x)\right) g_x(x) B^g_{n,k-1} + \sum_{k=1}^{n+1} f_{g^k}\left(g(x)\right) \frac{d}{d x} B^g_{n,k} \\
&=\sum_{k=1}^{n+1} f_{g^{k}}\left(g(x)\right) \left( g_x(x) B^g_{n,k-1} + \frac{d}{d x} B^g_{n,k}\right).
\end{align}
Because
\begin{equation}
B^g_{n+1,k} = g_x(x) B^g_{n,k-1} + \frac{d}{d x} B^g_{n,k},
\end{equation}
as given by Lemma \ref{lemma:bellrecur}, we get
\begin{align}
\frac{d^{n+1}}{d x^{n+1}} f(g(x)) = \sum_{k=1}^{n+1} f_{g^{k}}\left(g(x)\right)B^g_{n+1,k}.
\end{align}
\end{proof}

\section{Multivariate Bell Polynomials: Definition and the Issue}


In this section, we define a multivariate and matrix-valued version of the partial Bell polynomial. To that purpose, we also quickly present the notation for vector- and matrix-valued representations of derivatives and differentials used in this paper. Let $F$ be a matrix of size $s \times t$. We define a differential operator ${\partial \over \partial x'}$ w.r.t the transpose of $x$ by
\begin{equation}
{\partial \over \partial x'} = \left({\partial \over \partial x_1},\ldots,{\partial \over \partial x_{n_x}} \right).
\end{equation}
First-order derivatives of matrices and vectors are then defined by
\begin{equation}
F_x = {\partial F \over \partial x'} := F \otimes {\partial  \over \partial x'}.
\end{equation}
Analogously, higher-order derivatives are defined by
\begin{equation}
 F_{x^k} = {\partial F \over \partial (x')^{\otimes k}} := F \otimes \left({\partial  \over \partial x'}\right)^{\otimes k}.
\end{equation}
This notation implies some structure on the ordering of partial derivative within ${\partial F \over \partial x'}$.\footnote{See \cite{tracy1993higher}, \cite{sultan1996moments}, and \cite{rogers1980matrix} who also make use of this notation.} More explicitly this structure corresponds to the replacement of each component $F_{i,j}$ of $F$ by its respective row vector of derivatives, i.e.
\begin{equation}
{\partial F_{i,j} \over \partial x'} = \left({\partial F_{i,j} \over \partial x_1},\ldots,{\partial F_{i,j} \over \partial x_d} \right).
\end{equation}

When F is a vector, denoted by $f$ in the following, we can write the corresponding first-order differential using
\begin{equation}
d f = f_x dx
\end{equation}
and the higher-order differentials by
\begin{equation}
d^k f = f_{x^k} \left(dx\right)^{\otimes k},\footnote{For more details on the the representation of vector- and matrix-valued derivatives and the corresponding rules of differentiation, see Appendix A.}
\end{equation}
where the Kronecker product power notation refers to  $\left(dx \right)^{\otimes k}= \underbrace{dx \otimes \cdots \otimes dx}_{k \; \text{times}}$.

We are now equipped to define our generalization of the partial Bell polynomial.

\begin{definition}[Multivariate and Matrix-Valued Partial Bell Polynomial]
Let $g_{x^{l}}(x)$ denote the $n_{y} \times n_{x}^{l}$ matrix of the $l$th-order partial derivatives of $g(\cdot)$, and where $\textbf{B}^g_{n,k}$ is a $n_{y}^k \times n_{x}^n$ matrix given by
\begin{align}\label{Bell-multivariate}
\begin{array}{l}
\textbf{B}^g_{n,k} := \textbf{B}_{n,k}(g_{x}(x),g_{x^{2}}(x),\cdots,g_{x^{(n - k + 1)}}(x)) =\\[1.5ex]
\sum {n! \over j_{1}!j_{2}!\cdots j_{n-k+1}!}\left({g_{x}(x) \over 1!}\right)^{\otimes j_{1}}\otimes\left({g_{x^{2}}(x) \over 2!}\right)^{\otimes j_{2}}\otimes\cdots \otimes \left({g_{x^{(n - k + 1)}}(x) \over (n-k+1)!}\right)^{\otimes j_{n-k+1}},
\end{array}
\end{align}
where the sum is taken over all sequences $j_{1}$, $j_{2}$, $j_{3}$,...., $j_{n-k+1}$ of non-negative integers such that these two conditions are satisfied:
\begin{align}\label{Bell-Conditions-uni}
\begin{array}{cc}
j_{1}+j_{2}+\cdots +j_{n-k+1}&=k\\[1.5ex]
j_{1}+2j_{2}+3j_{3}+\cdots +(n-k+1)j_{n-k+1}&=n.
\end{array}
\end{align}
\end{definition}
This definition is identical in structure to the univariate case with the exception that scalar multiplication is substituted by the Kronecker product. The bold writing of $\textbf{B}^g_{n,k}$ emphasizes that this Bell polynomial is a matrix object and not a scalar.

\paragraph{Non-Recurrence of Multivariate Partial Bell Polynomials - Example} It is important to stress that the recurrence relations of univariate Bell Polynomials do not simply carry over to the multivariate and matrix-valued case as the Kronecker products do not commute. To see this, suppose a multivariate and matrix-valued recurrence relation was given by
\begin{equation}
\label{eq:partialBellrecur_mult}
\textbf{B}_{n,k}^g = \sum_{i = 1}^{n-k+1} {n-1 \choose i-1} \left( \textbf{B}_{n-i,k-1}^g \otimes g_{x^i}\right),
\end{equation}
as in the univariate case. Consider for given $g$ the following partial Bell polynomials:
\begin{align}
\textbf{B}^g_{1,1} &= g_{x} \\
\textbf{B}^g_{2,1} &= g_{x^2} \\
\textbf{B}^g_{3,2} &= 3\left(g_x \otimes g_{x^2} \right).
\end{align}
If the recurrence was to hold true, it would require that
\begin{equation}
 \textbf{B}^g_{3,2}=  \textbf{B}^g_{2,1} \otimes g_{x} + 2 \left(\textbf{B}^g_{1,1} \otimes g_{x^2}\right).
\end{equation}

Simply plugging in the given partial Bell polynomials then leads to
\begin{equation}
3\left(g_{x} \otimes g_{x^2}\right) \neq g_{x^2} \otimes  g_{x} + 2\left(g_{x} \otimes g_{x^2}\right).
\end{equation}
The left-hand-side does not equal the right-hand-side because the Kronecker product does not commute.

\paragraph{Reestablishing Recurrence of Multivariate Partial Bell Polynomials - Example cont.}
However, it is well known that Kronecker products can be commuted with the help of so-called commutation matrices.
Therefore, given the existence of commutation matrices $L$ and $R$, we can write
\begin{equation}
 g_{x^2} \otimes  g_{x} + 2\left(g_{x} \otimes g_{x^2}\right) = L\left(g_x \otimes  g_{x^2}\right) R + 2\left(g_{x} \otimes g_{x^2}\right) .
\end{equation}
We will introduce commutation matrices in more detail in the next section.
Suppose now, that the function $g$ is the inner function of the composite function $f \circ g$, and that $dx$ is a differential of the arguments of $g$. Then it turns out, we will be able to write
\begin{align}
f_{g^2} \left( \textbf{B}^g_{2,1} \otimes g_{x} + 2 \left(\textbf{B}^g_{1,1} \otimes g_{x^2}\right) \right)  (dx)^{\otimes 3}
&= f_{g^2} \left( g_{x^2} \otimes  g_{x} + 2\left(g_{x} \otimes g_{x^2}\right) \right)   (dx)^{\otimes 3} \label{eq:step11} \\
&= f_{g^2} \left( L\left(g_x \otimes  g_{x^2}\right) R + 2\left(g_{x} \otimes g_{x^2}\right)\right)   (dx)^{\otimes 3} \\
&= f_{g^2} L\left(g_x \otimes  g_{x^2}\right) R   (dx)^{\otimes 3} +  f_{g^2} 2\left(g_{x} \otimes g_{x^2}\right)  (dx)^{\otimes 3} \label{eq:step21}\\
&= f_{g^2}  3\left(g_x \otimes  g_{x^2} \right) (dx)^{\otimes 3}, \label{eq:step22} \\
& = f_{g^2}  \textbf{B}^g_{3,2}  (dx)^{\otimes 3} \label{eq:step1n}
\end{align}
because $f_{g^2} L = f_{g^2}$ and $R (dx)^{\otimes 3} = (dx)^{\otimes 3}$.
$f_{g^2} L = f_{g^2}$ is true because the commutation matrix $L$ coincidentally only commutes the last two colored factors in $f_{g^2} = f \otimes \textcolor{blue}{\partial \over \partial x'}  \otimes \textcolor{red}{\partial \over \partial x'} $, which does not change the overall derivative. Similarly, the commutation matrix $R$ will commute the colored objects in $\textcolor{blue}{dx \otimes dx} \otimes \textcolor{red}{dx}$, which also doesn't change the differential.

In the following we will show the generalization of this insight for $n,k \geq 0$ in $\textbf{B}^g_{n,k}$. To that purpose, we start by generalizing the equality of \eqref{eq:step21} to \eqref{eq:step22}, after which we show how we are always able to write multivariate partial Bell polynomial recurrences as in \eqref{eq:step11} and \eqref{eq:step1n}.
%
%
%


\paragraph{Outline}
In the following we outline the steps to prove that we are able to generalize recurrence relations for multivariate partial Bell Polynomials.
Taking the example of $\textbf{B}_{3,2}$ as a  guideline, in the first step, \textbf{Step 1.}, we show that for all matrices $D_k,X_n$ with
\begin{align}
D_k & := D^{\otimes k} \\
X_n & := X^{\otimes n},
\end{align}
where $D$ is an arbitrary real-valued $1 \times n_y$ vector, and $X$ an arbitrary real-valued $n_x \times 1$ vector, we can write
\begin{equation}
D_k  \left(g_{x^2} \otimes  g_{x}\right) X_n = D_k \left(g_{x} \otimes g_{x^2}\right) X_n.
\end{equation}
However, we will prove this more generally for so-called partial base polynomial of the form
\begin{equation}
 g_{x}^{\otimes j_1} \otimes g_{x^2}^{\otimes j_2} \otimes \ldots \otimes g_{x^{n-k+1}}^{\otimes j_{n-k+1}}.
\end{equation}

We define partial Bell base polynomials and present relevant results for commutation matrices in the next section in detail. In principle, this will show that the ordering of the factors within a base polynomial does not matter.
In the second step, \textbf{Step 2.}, we then generalize the recurrence relation
\begin{equation}
D_k \textbf{B}^g_{3,2} X_{n+1} = D_k \left( \textbf{B}^g_{2,1} \otimes g_{x} + 2 \left(\textbf{B}^g_{1,1} \otimes g_{x^2}\right) \right) X_{n+1}.
\end{equation}
and show that all recurrence relationships in Lemma \ref{lemma:bellrecur} hold in the multivariate case. In the general case, this amounts to showing that
\begin{equation}
D_k \textbf{B}_{n,k}^g X_{n+1} = D_k \left[\sum_{i = 1}^{n-k+1} {n-1 \choose i-1} \left( \textbf{B}_{n-i,k-1}^g \otimes g_{x^i}\right) \right] X_{n+1}
\end{equation}
holds. It turns out that we can rewrite any univariate partial Bell polynomial recurrence into a multivariate partial Bell polynomial recurrence in that from.

By choosing $D_k = f_{g^k}$ and $X_n = \left(dx\right)^{\otimes n}$, Faà Di Bruno's formula for the multivariate case follows using a similar induction proof as in the univariate case.

\section{Recurrence of Multivariate Bell Polynomials}

In this section we present well-known results concerning the commutation matrices for Kronecker products, and prove the generalization of multivariate partial Bell polynomial recurrence relations.
\paragraph{Commutation Matrices}
In general, two matrices connected by a Kronecker product do not commute. However one can construct so-called commutation matrices, so that for a given matrix $A_1$ of size $r_1 \times c_1$ and matrix $A_2$ of size $r_2 \times c_2$ we get,
\begin{equation}
K_{r_2 r_1}\left(A_1 \otimes A_2\right) K_{c_1 c_2} = A_2 \otimes A_1.
\end{equation}
$K_{r_2 r_1}$ and $K_{c_1 c_2}$ are commutation matrices of sizes $r_2 r_1 \times r_2 r_1$ and $c_1 c_2 \times c_1 c_2$, respectively. It will also be useful to note some properties of commutation matrices:
\begin{enumerate}
\item $K_{mn} = K_{nm}$
\item $K_{mn} K_{mn}' = K_{mn} K_{nm} = I_{mn}$
\item $K_{1n} = K_{n1} = I_n$.
\end{enumerate}
For a more detailed introduction and an explicit representation of commutation matrices see \cite{magnus1979commutation} and \cite{magnus_matrix_1999}.

One can easily generalize the commutation result for matrices to arbitrarily many matrices for all cyclical permutations.\footnote{See \cite{magnus1979commutation}, Theorem 3.1 (xi).}
However, it is also possible to generalize the commutation result to $m$ Kronecker factor matrices for arbitrary permutations $\sigma$. Early versions of that result can be found in \cite{holmquist_direct_1985}. We use a recent result in \cite{d2017shuffling}, as it is closer in notation to our case, and also delivers some useful corollaries we will make us of.
\begin{theorem}[Commutation of Kronecker Products, Theorem 3.12. in \cite{d2017shuffling}]
\label{theorem:dangeli}
For each $i = 1,\ldots,m$, let $A_i = \left\{a_{i, kl}\right\} \in \mathcal{M}_{r_i \times c_i}\left(\mathbb{R}\right)$ be a matrix with row-index $k=1,\ldots,r_i$ and column-index $l=1,\ldots,c_i$. Further let $\sigma \in Sym\left(m\right)$, then there exist so-called shuffle matrices $L$ and $R$ such that
\begin{equation}
L \left(A_1 \otimes A_2 \otimes \ldots \otimes A_{m} \right) R = A_{\sigma^{-1}\left(1\right)} \otimes A_{\sigma^{-1}\left(2\right)} \otimes \ldots \otimes A_{\sigma^{-1}\left(m\right)},
\end{equation}
where the shuffle matrices are given by
\begin{align}
 L = & \sum_{\substack{i_j = 1,\ldots r_j \\ j = 1,\ldots,k}} E^{i_{\sigma^{-1}\left(1\right)},i_1}_{r_{\sigma^{-1}\left(1\right)}\times r_1} \otimes E^{i_{\sigma^{-1}\left(2\right)},i_2}_{r_{\sigma^{-1}\left(2\right)}\times r_2} \otimes \ldots \otimes E^{i_{\sigma^{-1}\left(m\right)},i_{m}}_{r_{\sigma^{-1}\left(m\right)}\times r_{m}} \\
 R = & \sum_{\substack{j_h = 1,\ldots c_h \\ h = 1,\ldots,k}} E^{j_{1},j_{\sigma^{-1}\left(1\right)}}_{c_{1}\times c_{\sigma^{-1}\left(1\right)}} \otimes E^{j_{2},j_{\sigma^{-1}\left(2\right)}}_{c_{2}\times c_{\sigma^{-1}\left(2\right)}} \otimes \ldots \otimes E^{j_{m},j_{\sigma^{-1}\left(m\right)}}_{c_{m}\times c_{\sigma^{-1}\left(m\right)}}
\end{align}
and $E^{i,j}_{m \times n}$ is a elementary matrices of size $m \times n$ and a 1 at position $\left(i,j\right)$, and zeros otherwise.
\end{theorem}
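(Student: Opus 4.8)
The plan is to reduce the claim to a single, purely combinatorial identity on elementary matrices and then lift it by linearity. First I would expand each factor in its elementary-matrix basis, writing $A_i = \sum_{k_i, l_i} a_{i, k_i l_i}\, E^{k_i,l_i}_{r_i \times c_i}$, so that by multilinearity of the Kronecker product
\begin{equation}
A_1 \otimes \cdots \otimes A_m = \sum_{k_1,\ldots,k_m}\sum_{l_1,\ldots,l_m} \left(\prod_{q=1}^m a_{q, k_q l_q}\right) E^{k_1,l_1}_{r_1\times c_1}\otimes \cdots \otimes E^{k_m,l_m}_{r_m\times c_m}.
\end{equation}
Since $L$ and $R$ are themselves sums of Kronecker products of elementary matrices, the whole problem collapses to understanding how the triple product $L(\cdot)R$ acts on a single pure tensor $E^{k_1,l_1}\otimes\cdots\otimes E^{k_m,l_m}$.

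The key step is therefore the lemma that, for every choice of indices,
\begin{equation}
L\left(E^{k_1,l_1}_{r_1\times c_1}\otimes\cdots\otimes E^{k_m,l_m}_{r_m\times c_m}\right) R = E^{k_{\sigma^{-1}(1)},l_{\sigma^{-1}(1)}}\otimes\cdots\otimes E^{k_{\sigma^{-1}(m)},l_{\sigma^{-1}(m)}}.
\end{equation}
I would prove this using two facts: the mixed-product rule $(B_1\otimes\cdots\otimes B_m)(C_1\otimes\cdots\otimes C_m) = (B_1C_1)\otimes\cdots\otimes(B_mC_m)$, which applies precisely because the dimensions of the elementary factors in $L$ and $R$ are tailored to match those of the $A_i$, together with the orthogonality relation $E^{a,b}E^{c,d} = \delta_{bc}E^{a,d}$. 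Applying $L$ on the left and distributing across the tensor produces, in slot $p$, the product $E^{i_{\sigma^{-1}(p)},i_p}E^{k_p,l_p} = \delta_{i_p,k_p}E^{i_{\sigma^{-1}(p)},l_p}$; the Kronecker deltas force $i_p = k_p$ for all $p$, so the sum defining $L$ collapses to its single surviving term and the row index in slot $p$ becomes $k_{\sigma^{-1}(p)}$ while the column index stays $l_p$. Multiplying by $R$ on the right and repeating the identical collapse then permutes the column index in slot $p$ to $l_{\sigma^{-1}(p)}$, delivering the claimed pure tensor.

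Finally I would reassemble by linearity: applying the lemma term by term to the expansion gives
\begin{equation}
L(A_1\otimes\cdots\otimes A_m)R = \sum_{k_1,\ldots,k_m}\sum_{l_1,\ldots,l_m}\left(\prod_{q=1}^m a_{q,k_q l_q}\right) E^{k_{\sigma^{-1}(1)},l_{\sigma^{-1}(1)}}\otimes\cdots\otimes E^{k_{\sigma^{-1}(m)},l_{\sigma^{-1}(m)}}.
\end{equation}
Setting $k'_p := k_{\sigma^{-1}(p)}$ and $l'_p := l_{\sigma^{-1}(p)}$ relabels the summation bijectively, and rewriting the coefficient $\prod_q a_{q,k_q l_q}$ under the substitution $q=\sigma^{-1}(p)$ turns it into $\prod_p a_{\sigma^{-1}(p),k'_p l'_p}$, which is exactly the elementary-matrix expansion of $A_{\sigma^{-1}(1)}\otimes\cdots\otimes A_{\sigma^{-1}(m)}$. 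I expect the main obstacle to be precisely this index bookkeeping: one must track that $\sigma^{-1}$ appears simultaneously in the slot ordering and in the dimensions $r_{\sigma^{-1}(p)}, c_{\sigma^{-1}(p)}$ of the elementary factors of $L$ and $R$, and verify that the reindexing of the coefficient product by the permutation is consistent on both sides. The underlying Kronecker and elementary-matrix algebra is routine; the care lies entirely in keeping the permutation of indices and of dimensions in step.
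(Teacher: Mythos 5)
Your proof is correct, but there is nothing in the paper to compare it against: the paper does not prove this statement at all, it imports it verbatim as Theorem 3.12 of \cite{d2017shuffling} and only uses it downstream (in Lemma 2, Theorem 3, and Proposition 1). Your argument supplies exactly the missing self-contained proof, and it is the standard one: expand each $A_i$ in the elementary-matrix basis, prove the collapse lemma
\begin{equation*}
L\left(E^{k_1,l_1}_{r_1\times c_1}\otimes\cdots\otimes E^{k_m,l_m}_{r_m\times c_m}\right)R
= E^{k_{\sigma^{-1}(1)},l_{\sigma^{-1}(1)}}_{r_{\sigma^{-1}(1)}\times c_{\sigma^{-1}(1)}}\otimes\cdots\otimes E^{k_{\sigma^{-1}(m)},l_{\sigma^{-1}(m)}}_{r_{\sigma^{-1}(m)}\times c_{\sigma^{-1}(m)}}
\end{equation*}
via the mixed-product rule and $E^{a,b}E^{c,d}=\delta_{bc}E^{a,d}$, then reassemble by linearity with the bijective relabeling $k'_p=k_{\sigma^{-1}(p)}$, $l'_p=l_{\sigma^{-1}(p)}$. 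Your dimension bookkeeping is right: in slot $p$ the products $E^{i_{\sigma^{-1}(p)},i_p}_{r_{\sigma^{-1}(p)}\times r_p}E^{k_p,l_p}_{r_p\times c_p}$ and $E^{k_{\sigma^{-1}(p)},l_p}_{r_{\sigma^{-1}(p)}\times c_p}E^{j_p,j_{\sigma^{-1}(p)}}_{c_p\times c_{\sigma^{-1}(p)}}$ are both conformable, which is what legitimizes the mixed-product rule, and the reindexing of the coefficient $\prod_q a_{q,k_q l_q}$ into $\prod_p a_{\sigma^{-1}(p),k'_p l'_p}$ is valid because $\sigma^{-1}$ is a bijection of $\{1,\ldots,m\}$. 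One cosmetic point worth flagging: the summation bounds in the paper's statement read $j=1,\ldots,k$ and $h=1,\ldots,k$, where $k$ is evidently a typo for $m$ (and clashes with $k$ already being used as a row index); your proof implicitly corrects this by summing over all $m$ slots.
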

Similarly useful will be corollary 4.2 in \cite{d2017shuffling}, which we will present here in slightly different form.
\begin{corollary}[Corollary 4.2. in \cite{d2017shuffling}]
\label{cor:dangeli}
Define
\begin{equation}
K^{\sigma}_r := \sum_{\substack{i_j = 1,\ldots r_j \\ j = 1,\ldots,k}} E^{i_{\sigma^{-1}\left(1\right)},i_1}_{r_{\sigma^{-1}\left(1\right)}\times r_1} \otimes E^{i_{\sigma^{-1}\left(2\right)},i_2}_{r_{\sigma^{-1}\left(2\right)}\times r_2} \otimes \ldots \otimes E^{i_{\sigma^{-1}\left(m\right)},i_{m}}_{r_{\sigma^{-1}\left(m\right)}\times r_{m}},
\end{equation}
where $r = \left(r_1,\ldots,r_m\right)$ and $\sigma \in Sym(m)$. Then it follows that
\begin{equation}
K^{\sigma^{-1}}_{r_{\sigma^{-1}}} K^{\sigma}_r = I_{\prod_i r_i},
\end{equation}
with $r_{\sigma^{-1}} = \left(r_{\sigma^{-1}(1)},\ldots,r_{\sigma^{-1}(m)}\right)$.
\end{corollary}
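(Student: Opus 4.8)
The plan is to recognize that $K^\sigma_r$ is precisely the permutation matrix that reorders the $m$ tensor slots of a Kronecker product according to $\sigma$, so that the claimed identity merely says that undoing this reordering with $\sigma^{-1}$ restores every slot. First I would compute the action of $K^\sigma_r$ on the standard basis of $\mathbb{R}^{r_1}\otimes\cdots\otimes\mathbb{R}^{r_m}$. Writing a basis vector as $e_{a_1}\otimes\cdots\otimes e_{a_m}$ with $a_p\in\{1,\ldots,r_p\}$ and using $E^{i,j}e_a=\delta_{ja}e_i$ together with the mixed-product rule $(A_1\otimes\cdots\otimes A_m)(e_{a_1}\otimes\cdots\otimes e_{a_m})=(A_1 e_{a_1})\otimes\cdots\otimes(A_m e_{a_m})$, only the single summand with $i_p=a_p$ for every $p$ survives, giving
\begin{equation}
K^\sigma_r\left(e_{a_1}\otimes\cdots\otimes e_{a_m}\right)=e_{a_{\sigma^{-1}(1)}}\otimes\cdots\otimes e_{a_{\sigma^{-1}(m)}}.
\end{equation}
Thus the $p$-th output slot receives whatever occupied input slot $\sigma^{-1}(p)$, and $K^\sigma_r$ is an honest permutation matrix mapping $\mathbb{R}^{r_1}\otimes\cdots\otimes\mathbb{R}^{r_m}$ onto $\mathbb{R}^{r_{\sigma^{-1}(1)}}\otimes\cdots\otimes\mathbb{R}^{r_{\sigma^{-1}(m)}}$.

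Second, I would verify that the two factors are composable and then chain the two basis actions. The matrix $K^{\sigma^{-1}}_{r_{\sigma^{-1}}}$ uses the permutation $\tau=\sigma^{-1}$ and the dimension vector $s=r_{\sigma^{-1}}$ with $s_p=r_{\sigma^{-1}(p)}$; its domain $\mathbb{R}^{s_1}\otimes\cdots\otimes\mathbb{R}^{s_m}$ is exactly the codomain of $K^\sigma_r$, so the product is well defined and square of size $\prod_i r_i$. Applying the displayed basis formula with $(\tau,s)$ in place of $(\sigma,r)$, and noting $\tau^{-1}=\sigma$, it sends $e_{b_1}\otimes\cdots\otimes e_{b_m}$ to $e_{b_{\sigma(1)}}\otimes\cdots\otimes e_{b_{\sigma(m)}}$. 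Feeding in $b_p=a_{\sigma^{-1}(p)}$ from the first step, the $p$-th slot of the composite becomes $b_{\sigma(p)}=a_{\sigma^{-1}(\sigma(p))}=a_p$, so the composite fixes every basis vector and hence equals $I_{\prod_i r_i}$.

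Alternatively, and perhaps more in keeping with the elementary-matrix notation of the statement, I would multiply the two defining sums directly: the mixed-product property turns each slot into a product $E^{\cdot,\cdot}E^{\cdot,\cdot}$, the rule $E^{a,b}E^{c,d}=\delta_{bc}E^{a,d}$ forces the index constraint $i'_p=i_{\sigma^{-1}(p)}$, and after eliminating the primed indices each slot collapses to $E^{i_p,i_p}_{r_p\times r_p}$; summing over all $i$ then gives $\bigotimes_{p=1}^m\big(\sum_{i_p}E^{i_p,i_p}\big)=\bigotimes_{p=1}^m I_{r_p}=I_{\prod_i r_i}$. I expect the only real difficulty to be purely clerical: tracking $\sigma^{-1}$ versus $\sigma$ consistently and keeping the permuted dimension vector $r_{\sigma^{-1}}$ aligned with the correct tensor slots, so that the row and column sizes of adjacent elementary-matrix factors genuinely match. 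Once the slot-permutation interpretation is fixed, the group identity $\sigma\circ\sigma^{-1}=\mathrm{id}$ does all the work and no substantive computation remains.
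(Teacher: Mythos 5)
Your proof is correct, but note that the paper offers no proof of this statement to compare against --- it is imported verbatim as Corollary 4.2 of \cite{d2017shuffling}, with only the remark that $K^{\sigma}_r$ coincides with the matrix $P^{\sigma}_r$ of that reference. Your argument therefore serves as a self-contained replacement for the citation rather than a variant of an existing proof, and both of your routes are sound. The basis-vector computation $K^{\sigma}_r\left(e_{a_1}\otimes\cdots\otimes e_{a_m}\right)=e_{a_{\sigma^{-1}(1)}}\otimes\cdots\otimes e_{a_{\sigma^{-1}(m)}}$ is the key fact; your explicit check that the codomain of $K^{\sigma}_r$ equals the domain of $K^{\sigma^{-1}}_{r_{\sigma^{-1}}}$ (because the $p$-th slot dimension of the latter is $r_{\sigma^{-1}(p)}$) is precisely the bookkeeping point a careless attempt would fumble, and the group identity $\sigma\circ\sigma^{-1}=\mathrm{id}$ then finishes the argument. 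The second, purely algebraic route --- multiplying the defining sums slot-by-slot via the mixed-product rule and $E^{a,b}E^{c,d}=\delta_{bc}E^{a,d}$, which forces $i'_p=i_{\sigma^{-1}(p)}$ and collapses the product to $\bigotimes_{p}\bigl(\sum_{i_p}E^{i_p,i_p}\bigr)=I_{\prod_i r_i}$ --- has the added advantage of staying entirely inside the elementary-matrix calculus that the paper itself deploys in Lemma \ref{eq:invariant_perm} and Theorem \ref{theorem:kontny}, so it would splice most naturally into the text as a proof of the corollary. One clerical point: the summation range in the statement (inherited from the paper) reads $j=1,\ldots,k$ but should be $j=1,\ldots,m$; your proof implicitly makes this correction.
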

$K^{\sigma}_r$ is identical to $P^{\sigma}_r$ in \cite{d2017shuffling}. We use $K$ instead of $P$ to emphasize the similarity to commutation matrices.
Lastly, note that matrices $L$ and $R$ in Theorem \ref{theorem:dangeli} are then given by $L = P^{\sigma}_r$ and $R = P^{\sigma^{-1}}_{c_{\sigma^{-1}}}$.

We can now proceed to prepare the proof of step 1 using the following lemma.

\begin{lemma}
\label{eq:invariant_perm}
For a permutation $\sigma \in Sym(k)$ and for matrices
\begin{align}
D_k & = D^{\otimes k} \\
X_n & = X^{\otimes n}, \\
\end{align}
where $D \in \mathcal{M}_{1 \times n_y}\left(\R\right)$ and $X \in \mathcal{M}_{n_x \times 1}\left(\R\right)$,
there exist shuffle matrices $K^{\sigma^{-1}}_{c_{\sigma^{-1}}}$ and $K^{\sigma}_r$ such that
\begin{equation}
D_kK^{\sigma^{-1}}_{c_{\sigma^{-1}}} = D_k
\end{equation}
with
\begin{equation}
K^{\sigma^{-1}}_{c_{\sigma^{-1}}}= \sum_{\substack{j_h = 1,\ldots n_y \\ h = 1,\ldots,k}}  E^{j_{1},j_{{\sigma}^{-1}\left(1\right)}}_{n_y\times n_y}  \otimes E^{j_{2},j_{{\sigma}^{-1}\left(2\right)}}_{c}  \otimes \ldots \otimes E^{j_{k},j_{{\sigma}^{-1}\left(k\right)}}_{n_y\times n_y}
\end{equation}
and
\begin{equation}
K^{\sigma}_r X_n  = X_n
\end{equation}
with
\begin{equation}
K^{\sigma}_r = \sum_{\substack{i_j = 1,\ldots r_j \\ j = 1,\ldots,k}}  E^{i_{\sigma^{-1}\left(1\right)},i_1}_{r_{{\sigma^{-1}\left(1\right)}}\times r_1}  \otimes E^{i_{\sigma^{-1}\left(2\right)},i_2}_{r_{{\sigma^{-1}\left(2\right)}}\times r_2}  \otimes \ldots \otimes E^{i_{\sigma^{-1}\left(k\right)},i_{k}}_{r_{{\sigma^{-1}\left(k\right)}}\times r_{k}},
\end{equation}

\end{lemma}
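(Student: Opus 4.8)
The plan is to obtain both identities directly from Theorem~\ref{theorem:dangeli} by specializing the shuffle construction to the degenerate case in which every Kronecker factor is the \emph{same} vector. Two features then work in our favor: first, since $\sigma^{-1}$ only reorders identical factors, the permuted product on the right-hand side of Theorem~\ref{theorem:dangeli} is unchanged; second, because one of the two vector dimensions equals $1$ (a single row for $D$, a single column for $X$), one of the two shuffle matrices collapses to the scalar $1$. The matrices $K^{\sigma}_r$ and $K^{\sigma^{-1}}_{c_{\sigma^{-1}}}$ written in the statement are precisely the $L$ and $R$ of Theorem~\ref{theorem:dangeli}, so essentially no new construction is needed.

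First I would prove $D_k K^{\sigma^{-1}}_{c_{\sigma^{-1}}} = D_k$. Apply Theorem~\ref{theorem:dangeli} with all $k$ factors equal to $D$, so that $r_i = 1$ and $c_i = n_y$ for every $i$. The right-hand side $D_{\sigma^{-1}(1)} \otimes \cdots \otimes D_{\sigma^{-1}(k)}$ equals $D^{\otimes k} = D_k$ because the factors coincide, and the left shuffle matrix $L = K^{\sigma}_r$ is built from $1 \times 1$ elementary blocks, hence
\begin{equation}
L = E^{1,1}_{1 \times 1} \otimes \cdots \otimes E^{1,1}_{1 \times 1} = 1.
\end{equation}
Theorem~\ref{theorem:dangeli} then reduces to $D_k\,K^{\sigma^{-1}}_{c_{\sigma^{-1}}} = D_k$, with $K^{\sigma^{-1}}_{c_{\sigma^{-1}}}$ assembled from $n_y \times n_y$ blocks exactly as displayed.

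The second identity $K^{\sigma}_r X_n = X_n$ is the mirror image: I would apply Theorem~\ref{theorem:dangeli} with every factor of $X_n$ equal to $X$, so that now $r_i = n_x$ and $c_i = 1$. The permuted product again collapses to $X^{\otimes n} = X_n$, while this time it is the \emph{right} shuffle matrix $R = K^{\sigma^{-1}}_{c_{\sigma^{-1}}}$ that is a Kronecker product of $1 \times 1$ blocks and therefore equals $1$. What survives is $K^{\sigma}_r X_n = X_n$, with $K^{\sigma}_r$ built from $r_{\sigma^{-1}(j)} \times r_j = n_x \times n_x$ blocks as in the statement.

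The only delicate point --- and the step I would treat as the main obstacle --- is the bookkeeping that correctly identifies which of the two displayed matrices plays the role of $L$ and which plays the role of $R$ in each case, together with the verification that the degenerate one truly reduces to the scalar $1$ (the Kronecker-power analogue of the property $K_{1n} = I_n$). Concretely, when all $r_j$ (respectively all $c_h$) equal $1$, the defining sum of Corollary~\ref{cor:dangeli} contains a single term, namely the product of $1 \times 1$ blocks shown above, so the corresponding shuffle matrix drops out of the product. Beyond this matching, the argument is pure substitution into Theorem~\ref{theorem:dangeli}; crucially, no commutation of \emph{distinct} factors is ever needed, which is exactly why the identities survive despite the non-commutativity of the Kronecker product stressed earlier.
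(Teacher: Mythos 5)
Your treatment of the first identity matches the paper's own proof exactly: all $k$ factors of $D_k$ are identical, so the permuted product is unchanged, and since each factor $D$ has a single row the left shuffle matrix collapses to the scalar $1$, leaving $D_k K^{\sigma^{-1}}_{c_{\sigma^{-1}}} = D_k$. That half is correct and needs no adjustment.

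The gap is in the second identity. You read the statement's $K^{\sigma}_r$ as built from blocks of size $n_x \times n_x$, obtained by splitting $X_n$ into its $n$ atomic factors $X$. But the statement has $\sigma \in Sym(k)$ and exactly $k$ Kronecker factors $E^{i_{\sigma^{-1}(j)},i_j}_{r_{\sigma^{-1}(j)} \times r_j}$, $j = 1,\ldots,k$, and in the paper the $r_j$ are the row dimensions of a $k$-block decomposition $X_n = X^{\otimes l_1} \otimes \cdots \otimes X^{\otimes l_k}$ with $\sum_j l_j = n$, i.e. $r_j = n_x^{l_j}$; these generally differ across $j$, so the elementary blocks are non-square. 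Your construction coincides with the statement only when $k = n$: for $k < n$ either your shuffle matrix has size $n_x^k \times n_x^k$ and is not conformable with $X_n$, or you are implicitly permuting $n$ objects where the statement permutes $k$. The needed argument is your same idea applied at the block level: the $k$ blocks $X^{\otimes l_j}$, though of different sizes, are all Kronecker powers of the same vector, so reordering them still multiplies out to $X^{\otimes n}$ (powers of a fixed vector concatenate regardless of order); and since every block is a column vector, all $c_j = 1$, the right shuffle matrix is $1$, and Theorem \ref{theorem:dangeli} yields $K^{\sigma}_r X_n = X_n$ with the statement's $k$-factor shuffle matrix. This block-level version is not a technicality --- it is the form actually invoked in Theorem \ref{theorem:kontny}, where the $r_j$ must line up with the column dimensions $n_x^{l}$ of the factors $g_{x^l}$ of the Bell base polynomial. (Alternatively, you could keep your atomic decomposition and prove additionally that the block shuffle matrix for $\sigma \in Sym(k)$ equals the atomic shuffle matrix for the induced permutation of the $n$ factors, but that step is absent from your argument.)
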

\begin{proof}
Notice that $D_k  = D \otimes \ldots \otimes D$ is invariant under any permutation ${\sigma}$, as all factors are identical. We can then write down a shuffle matrix $K^{\sigma^{-1}}_{c_{\sigma^{-1}}}$ following Theorem \ref{theorem:dangeli} by
\begin{equation}
K^{\sigma^{-1}}_{c_{\sigma^{-1}}}= \sum_{\substack{j_h = 1,\ldots n_y \\ h = 1,\ldots,k}}  E^{j_{1},j_{{\sigma}^{-1}\left(1\right)}}_{n_y\times n_y}  \otimes E^{j_{2},j_{{\sigma}^{-1}\left(2\right)}}_{n_y\times n_y}  \otimes \ldots \otimes E^{j_{k},j_{{\sigma}^{-1}\left(k\right)}}_{n_y\times n_y},
\end{equation}
since $c_j = n_y$ for all $j$. Because $D_k$ is a row vector, the left-handed shuffle matrix is just 1 and then by construction it holds that $D_k K^{\sigma^{-1}}_{c_{\sigma^{-1}}} = D_k$.
\\\\
A similar argument can be made for $X_n$. The only difference here is that we don't want to shuffle all individual $n$ matrices but only a $k$ matrix decomposition out of all $n$ matrices. Given a permutation $\sigma$ the shuffle is then given by
\begin{equation}
 K^{\sigma}_r = \sum_{\substack{i_j = 1,\ldots r_j \\ j = 1,\ldots,k}}  E^{i_{\sigma^{-1}\left(1\right)},i_1}_{r_{{\sigma^{-1}\left(1\right)}}\times r_1}  \otimes E^{i_{\sigma^{-1}\left(2\right)},i_2}_{r_{{\sigma^{-1}\left(2\right)}}\times r_2}  \otimes \ldots \otimes E^{i_{\sigma^{-1}\left(k\right)},i_{k}}_{r_{{\sigma^{-1}\left(k\right)}}\times r_{k}},
\end{equation}
where rows $r_j$ depend on the choice of the $k$ matrix decomposition of $n$.
Since $X_n$ is a column vector the right-handed shuffle matrix is 1 and since all matrices are identical, $X_n$ is invariant to any shuffle matrix, so it follows
that $X_n =  K^{\sigma}_r  X_n$.

\end{proof}

We proceed by defining the notion of a partial Bell base polynomial. This will be helpful, as these represent the smallest meaningful part of a Bell polynomial to which we can apply commutation matrices.

\begin{definition}[Partial Bell Base Polynomial]
We a define a partial Bell `base polynomial' using
\begin{equation}
\tilde{\textbf{B}}^g_{n,k,j}:= g_{x}^{\otimes j_1} \otimes g_{x^2}^{\otimes j_2} \otimes \ldots \otimes g_{x^{n-k+1}}^{\otimes j_{n-k+1}},
\end{equation}
where $j = \left(j_1,\ldots,j_{n-k+1}\right)$.
\end{definition}

Note that $\tilde{\textbf{B}}^g_{n,k,j}$ is a Kronecker product of $k$ matrices because of
\begin{align}
\tilde{\textbf{B}}^g_{n,k,j}:=  & \underbrace{g_{x} \otimes \ldots \otimes g_{x}}_{j_1 \; \text{times}} \otimes  \ldots \otimes  \underbrace{g_{x^{n-k+1}} \otimes \ldots \otimes g_{x^{n-k+1}}}_{ j_{n-k+1} \; \text{times}} \\
:=& g^{\left(1\right)} \otimes \ldots \otimes g^{\left(j_1\right)} \otimes  \ldots \otimes  g^{\left(k-j_{n-k+1}+1\right)} \otimes \ldots \otimes g^{\left(k\right)}
\end{align}
using the fact that $\sum_k j_l = k$.

\begin{example}
With $n = 3, k = 2, j = \left(1,1\right)$, we get
\begin{equation}
\tilde{\textbf{B}}^g_{3,2, \left(1,1\right)} = g_x \otimes g_{x^2}.
\end{equation}
Strictly speaking, we could omit the subindices for $n$ and $k$ in the base polynomial notation. However, it turns out to be useful to be explicitly keeping track of which partial Bell polynomial a specific base polynomial belongs to, without computing that using values in $j$. For example:
\begin{equation}
g_x \otimes \tilde{\textbf{B}}^g_{3,2, \left(1,1\right)} = \tilde{\textbf{B}}^g_{4,3, \left(2,1\right)},
\end{equation}
because $j_1 + j_2 = 3$ and $j_1 + 2j_2 = 4$.
\end{example}

\begin{definition}[Permutations of Partial Bell Base Polynomials]
Given a permutation $\sigma \in Sym(k)$ over these $k$ matrix factors , then $\tilde{\textbf{B}}^{g,\sigma}_{n,k,j}$ describes the Bell base polynomial under a permutation $\sigma$
\begin{equation}
\tilde{\textbf{B}}^{g,\sigma}_{n,k,j}:= g^{\sigma\left(1\right)} \otimes \ldots \otimes g^{\sigma\left(j_1\right)} \otimes  \ldots \otimes  g^{\sigma\left(k-j_{n-k+1}+1\right)} \otimes \ldots \otimes g^{\sigma\left(k\right)}
\end{equation}
Based on the previous discussion, it should be immediately clear that in general $\tilde{\textbf{B}}^{g}_{n,k,j}  \neq \tilde{\textbf{B}}^{g,\sigma}_{n,k,j}$.
\end{definition}

We can justify calling $\tilde{\textbf{B}}^g_{n,k,j}$ `base polynomials' because partial Bell polynomials can be represented as linear combinations
\begin{equation}
\textbf{B}_{n,k}^g = \sum_j \alpha_j \tilde{\textbf{B}}^g_{n,k,j},
\end{equation}
where $j = \left(j_1,\ldots,j_{n-k+1}\right)$ and $\alpha_j = \frac{n!}{j_1! j_2! \ldots j_{n-k+1}! \; 2^{j_2} 3^{j_3} (n-k+1)^{j_{n-k+1}}}$.
The multi-index $j$ iterates over all vectors which satisfy $\sum_l j_l = k$ and $\sum_l l j_l = n$.

Based in this definition and using the previous lemma we are now able to show that under a simple transformation Bell base polynomial are invariant to permutations of their ordering.

\begin{theorem}[Proof of Step 1, Order Invariance of Partial Bell Base Polynomials]
\label{theorem:kontny}
For Bell base polynomials
\begin{equation}
\tilde{\textbf{B}}^g_{n,k,j}:= g_{x}^{\otimes j_1} \otimes g_{x^2}^{\otimes j_2} \otimes \ldots \otimes g_{x^{n-k+1}}^{\otimes j_{n-k+1}},
\end{equation}
where $j = \left(j_1,\ldots,j_{n-k+1}\right)$ with $\sum_l j_l = k$ and $\sum_l l j_l = n$, we can shuffle the component matrices arbitrarily such that
\begin{equation}
D_k \tilde{\textbf{B}}^g_{n,k,j} X_n = D_k \tilde{\textbf{B}}^{g,\sigma}_{n,k,j} X_n,
\end{equation}
where $\tilde{\textbf{B}}^{g,\sigma}_{n,k,j}$ is a reshuffling of individual matrices in $\tilde{\textbf{B}}^g_{n,k,j}$ under some permutation $\sigma$ over $\left\{1,\ldots,k\right\}$.
\end{theorem}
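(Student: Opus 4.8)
The plan is to realize the reshuffled base polynomial $\tilde{\textbf{B}}^{g,\sigma}_{n,k,j}$ as a two-sided multiplication of $\tilde{\textbf{B}}^g_{n,k,j}$ by shuffle matrices supplied by Theorem \ref{theorem:dangeli}, and then to absorb those shuffle matrices into the symmetric tensors $D_k$ and $X_n$ by means of Lemma \ref{eq:invariant_perm}. Since both absorptions leave their vectors unchanged, the scalar $D_k \tilde{\textbf{B}}^g_{n,k,j} X_n$ is unaffected by the reordering.

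First I would set up the factorization. Writing $\tilde{\textbf{B}}^g_{n,k,j} = g^{(1)} \otimes \cdots \otimes g^{(k)}$ as the Kronecker product of its $k$ component matrices, each $g^{(i)}$ equal to some $g_{x^{l_i}}$ of size $n_y \times n_x^{l_i}$, I apply Theorem \ref{theorem:dangeli} with $A_i = g^{(i)}$ to obtain a row shuffle $L$ over the $k$ identical blocks of size $n_y$ and a column shuffle $R$ over the $k$ blocks of sizes $n_x^{l_1},\ldots,n_x^{l_k}$, such that
\[
L\, \tilde{\textbf{B}}^g_{n,k,j}\, R = \tilde{\textbf{B}}^{g,\sigma}_{n,k,j}.
\]
Here one must keep track of the direction of the permutation: Theorem \ref{theorem:dangeli} delivers the factor order $\sigma^{-1}$, so to realise a prescribed $\sigma$ I instantiate the theorem at $\sigma^{-1}$; because the claim quantifies over all $\sigma \in Sym(k)$, this bookkeeping is harmless and I would flag it only in passing.

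Next I substitute the factorization into the target expression and regroup,
\[
D_k \tilde{\textbf{B}}^{g,\sigma}_{n,k,j} X_n = (D_k L)\, \tilde{\textbf{B}}^g_{n,k,j}\, (R X_n),
\]
using that $L$ is $n_y^k \times n_y^k$ and $R$ is $n_x^n \times n_x^n$ so all products are conformable. I then invoke Lemma \ref{eq:invariant_perm} to collapse the outer factors: the row shuffle permutes the $k$ identical $n_y$-blocks of $D_k = D^{\otimes k}$, giving $D_k L = D_k$, and the column shuffle permutes the $k$ blocks $X^{\otimes l_1},\ldots,X^{\otimes l_k}$ of $X_n = X^{\otimes n}$, all built from the same $X$, giving $R X_n = X_n$. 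Combining the two collapses yields $D_k \tilde{\textbf{B}}^{g,\sigma}_{n,k,j} X_n = D_k \tilde{\textbf{B}}^g_{n,k,j} X_n$, and since $\sigma$ was arbitrary the base polynomial is order-invariant once sandwiched between $D_k$ and $X_n$.

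The step I expect to be the main obstacle is the clean matching of the shuffle matrices produced by Theorem \ref{theorem:dangeli} with the invariance relations of Lemma \ref{eq:invariant_perm}. Two points need care. First, the column shuffle $R$ acts on blocks of \emph{unequal} sizes $n_x^{l_i}$, so I must confirm that $X_n$, regrouped as $X^{\otimes l_1} \otimes \cdots \otimes X^{\otimes l_k}$, is genuinely fixed by a permutation of these unequal blocks; this is precisely the content of the second half of Lemma \ref{eq:invariant_perm}, which treats the $k$-block decomposition of the $n$ identical factors rather than all $n$ individual ones. Second, the $L$-type shuffle absorbed on the right of $D_k$ must be reconciled with the $R$-type form appearing in Lemma \ref{eq:invariant_perm}; this is settled by observing that $D_k$ and $X_n$, being Kronecker powers of a single vector, are invariant under every block permutation irrespective of whether it is written in $L$- or $R$-form, an identity that also follows from the inverse relation in Corollary \ref{cor:dangeli}.
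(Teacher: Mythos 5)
Your proof is correct and takes essentially the same approach as the paper's: both realize the permuted base polynomial as a shuffle-matrix sandwich $L\,\tilde{\textbf{B}}^g_{n,k,j}\,R = \tilde{\textbf{B}}^{g,\sigma}_{n,k,j}$ via Theorem \ref{theorem:dangeli} and then absorb the shuffles into $D_k$ and $X_n$ via Lemma \ref{eq:invariant_perm}, with Corollary \ref{cor:dangeli} handling the $L$-form versus $R$-form bookkeeping you correctly flag. The only difference is the direction of the algebra --- the paper inserts the identities $K^{\sigma^{-1}}K^{\sigma}=I$ around the unpermuted polynomial and recombines, while you factor the permuted one and collapse --- which is the same argument read in reverse.
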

\begin{proof}
Take the Bell base polynomial of the form
\begin{equation}
\tilde{\textbf{B}}^g_{n,k,j}:= g_{x}^{\otimes j_1} \otimes g_{x^2}^{\otimes j_2} \otimes \ldots \otimes g_{x^{n-k+1}}^{\otimes j_{n-k+1}},
\end{equation}
where $j = \left(j_1,\ldots,j_{n-k+1}\right)$ and $\sum_l j_l = k$ and $\sum_l l j_l = n$. $\tilde{\textbf{B}}^g_{n,k,j}$ now consists of exactly $k$ individual matrices of the form $g_{x^l}$  for $l = 1,\ldots,n-k+1$. Each product component has $n_y$ rows and $n_x^l$ columns. Given some permutation $\sigma$ over $\left\{1,\ldots,k\right\}$, Theorem \ref{theorem:dangeli} delivers shuffle matrices $K^{\sigma}_r $ and $K^{\sigma^{-1}}_{c}$:

\begin{equation}
K^{\sigma}_r   = \sum_{\substack{i_j = 1,\ldots n_y \\ j = 1,\ldots,k}}  E^{i_{\sigma^{-1}\left(1\right)},i_1}_{n_y\times n_y}  \otimes E^{i_{\sigma^{-1}\left(2\right)},i_2}_{n_y\times n_y}  \otimes \ldots \otimes E^{i_{\sigma^{-1}\left(k\right)},i_{k}}_{n_y\times n_y}
\end{equation}
with
\begin{equation}
\left(r_1,r_2,\ldots,r_{k}\right) = \left(\underbrace{n_y,\ldots,n_y}_{k \; \text{times}}\right).
\end{equation}

\begin{equation}
K^{\sigma^{-1}}_{c_{\sigma^{-1}}}  = \sum_{\substack{j_h = 1,\ldots c_h \\ h = 1,\ldots,k}}  E^{j_{1},j_{\sigma^{-1}\left(1\right)}}_{c_{1}\times c_{\sigma^{-1}\left(1\right)}} \otimes E^{j_{2},j_{\sigma^{-1}\left(2\right)}}_{c_{2}\times c_{\sigma^{-1}\left(2\right)}} \otimes \ldots \otimes E^{j_{k},j_{\sigma^{-1}\left(k\right)}}_{c_{k}\times c_{\sigma^{-1}\left(k\right)}},
\end{equation}
with
\begin{equation}
\left(c_1,c_2,\ldots,c_{k}\right) = \left(\underbrace{n_x,\ldots,n_x}_{j_1 \; \text{times}}, \ldots, \underbrace{n_x^l,\ldots,n_x^l}_{j_l \; \text{times}}, \ldots, \underbrace{n_x^{n-k+1},\ldots,n_x^{n-k+1}}_{j_{n-k+1} \; \text{times}}\right).
\end{equation}

The resulting Kronecker product of matrices using a permutation $\sigma$ is then given by
\begin{equation}
\tilde{\textbf{B}}^{g,\sigma}_{n,k,j} =  K^{\sigma}_r   \tilde{\textbf{B}}^{g}_{n,k,j} K^{\sigma^{-1}}_{c}.
\end{equation}

Following Lemma \ref{eq:invariant_perm} we can choose columns of the shuffle matrix of $D_k$, i.e. $K^{\sigma^{-1}}_{c_{\sigma^{-1}}}$ such that $c = r$. This is possible, because in this case the number of rows of all base polynomial factors, $g_{x^l}$ is $n_y$ as is the number of columns of $D_k$. Using the same permutation $\sigma$ we therefore get using Corrolary \ref{cor:dangeli}
\begin{equation}
K^{\sigma^{-1}}_{r_{\sigma^{-1}}} K^{\sigma}_r = I_{n_y^k}.
\end{equation}

Now for $X_n = X \otimes \cdots \otimes X$ choose a decomposition of $k$ matrices such that their rows line up with columns $\left(c_1,c_2,\ldots,c_k\right)$ of the $k$ factors of $\tilde{\textbf{B}}^{g}_{n,k,j}$.
This is possible since each individual $X$ is a column vector of size $n_x \times 1$, such that we can write for a given choice of indices $j$:
\begin{equation}
X_n = \underbrace{\left(X \otimes \ldots \otimes X\right)}_{j_1 \; \text{times}} \otimes \ldots \otimes \underbrace{\left(X^{\otimes l} \otimes \ldots \otimes X^{\otimes l}\right)}_{j_l \; \text{times}} \otimes \ldots \otimes \underbrace{\left(X^{\otimes n-k+1} \otimes \ldots \otimes X^{\otimes n-k+1}\right)}_{j_{n-k+1} \; \text{times}}.
\end{equation}
Because by definition $\sum_l l j_l = n$ and $\sum_l j_l = k$ and because each individual $X^{\otimes l}$ has $n_x^l$ rows, its rows are given by
\begin{equation}
\label{eq:defr}
\left({r}_1,{r}_2,\ldots,{r}_{k}\right) = \left(\underbrace{n_x,\ldots,n_x}_{j_1 \; \text{times}}, \ldots, \underbrace{n_x^l,\ldots,n_x^l}_{j_l \; \text{times}}, \ldots, \underbrace{n_x^{n-k+1},\ldots,n_x^{n-k+1}}_{j_{n-k+1} \; \text{times}}\right).
\end{equation}

Now choose a shuffle matrix for the same permutation $\sigma$ as before given by Lemma \ref{eq:invariant_perm} with $K^{\sigma}_{c}$ where we choose $c = r$, where $r$ is given by definition \eqref{eq:defr}.
By Corrolary \ref{cor:dangeli} it follows that $K^{\sigma^{-1}}_{c_{\sigma^{-1}}} K^{\sigma}_c = I_{n_x^n}$.
Hence we get
\begin{align}
   D_k \tilde{\textbf{B}}^{g}_{n,k,j} X_n  &= (D_k K^{\sigma^{-1}}_{r_{\sigma^{-1}}}) ( K^{\sigma}_r \tilde{\textbf{B}}^{g}_{n,k,j} K^{\sigma^{-1}}_{c_{\sigma^{-1}}} ) (K^{\sigma}_c X_n).  \\
  																																						& = D_k \left( K^{\sigma^{-1}}_{r_{\sigma^{-1}}}    K^{\sigma}_r\right) \tilde{\textbf{B}}^{g,\sigma}_{n,k,j}\left( K^{\sigma^{-1}}_{c_{\sigma^{-1}}}   K^{\sigma}_c \right) X_n \\
 																																							& =  D_k \tilde{\textbf{B}}^{g,\sigma}_{n,k,j} X_n,
\end{align}
which concludes the proof.
\end{proof}
Having shown the invariance of Bell base polynomial under pre- and post-multiplication of $D_k$ and $X_n$, we can proceed to prove that partial Bell recurrence relations can similarly be written in that way.

\begin{proposition}[Proof of Step 2, Invariance of Bell Recurrence Relations]
The following partial Bell polynomial recurrences hold
\label{prop:bellequality_multi}
 \begin{enumerate}
\item[i)] $D_k \textbf{B}_{n,k}^g X_n = D_k \left[ \sum_{i = 1}^{n-k+1} {n-1 \choose i-1} \left(\textbf{B}_{n-i,k-1}^g \otimes g_{x^i}\right) \right]  X_n$ \\
\item[ii)]$ D_k\frac{d}{dx} \textbf{B}_{n,k}^g X_{n+1} = D_k \left[ \sum_{i = 1}^{n-k+1} {n \choose i } \left( \textbf{B}_{n-i,k-1}^g \otimes g_{x^{i+1}} \right) \right] X_{n+1}$ \\
 \item[iii)]  $D_k \textbf{B}^g_{n+1,k} X_{n+1} = D_k \left[ \left( \textbf{B}^g_{n,k-1} \otimes g_{x} \right)  +\frac{\partial \textbf{B}^g_{n,k}}{\partial x'}\right] X_{n+1}$\\
 \end{enumerate}
 \end{proposition}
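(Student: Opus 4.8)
The plan is to treat all three multivariate recurrences as the Kronecker-lifted analogues of the scalar identities \eqref{eq:recursion1}, \eqref{eq:recursion2} and Lemma \ref{lemma:bellrecur}. Read as polynomial identities in the commuting symbols $g_{x}, g_{x^2}, \ldots$, those scalar relations are already true; the only thing that can fail when ordinary products are replaced by Kronecker products is that $\otimes$ does not commute, so a single scalar monomial now corresponds to a whole orbit of differently ordered base polynomials. Theorem \ref{theorem:kontny} is precisely the tool that removes this obstruction: once a base polynomial is sandwiched between $D_k$ and $X_n$, its factors may be reordered freely. Hence my strategy for each part is the same: expand both sides into base polynomials via $\textbf{B}_{n,k}^g = \sum_j \alpha_j \tilde{\textbf{B}}^g_{n,k,j}$, match the resulting monomials term by term using the scalar recurrence, and invoke Theorem \ref{theorem:kontny} to absorb the ordering discrepancies.

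For part i) I would expand $\textbf{B}_{n,k}^g$ on the left and each $\textbf{B}_{n-i,k-1}^g$ on the right into base polynomials. Every summand $\tilde{\textbf{B}}^g_{n-i,k-1,j'} \otimes g_{x^i}$ on the right is a base polynomial of type $(n,k)$ — it carries $k$ factors whose orders sum to $(n-i)+i = n$ — but with the factor $g_{x^i}$ pinned to the trailing slot (whereas in the scalar identity it sits on the left). Because \eqref{eq:recursion1} holds as an identity of commutative monomials in the $g_{x^l}$, summing over $i$ and $j'$ reproduces exactly the multiset of monomials, with matching coefficients $\alpha_j$, that appear in $\textbf{B}_{n,k}^g$. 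Sandwiching with $D_k$ and $X_n$ and applying Theorem \ref{theorem:kontny} to reorder each right-hand base polynomial into the canonical ordering used on the left then yields i).

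Part ii) proceeds along the same lines, the new ingredient being differentiation. I would first differentiate $\textbf{B}_{n,k}^g$ with the matrix product rule of Appendix A: each term of the derivative raises the order of exactly one factor $g_{x^l} \mapsto g_{x^{l+1}}$, so that every resulting term is — up to reordering and possibly a commutation matrix produced by the convention $F_x = F \otimes \tfrac{\partial}{\partial x'}$ — a base polynomial of type $(n+1,k)$. Matching coefficients against \eqref{eq:recursion2} and then absorbing the stray commutation matrices against $D_k$ on the left and $X_{n+1}$ on the right, via Lemma \ref{eq:invariant_perm} and Theorem \ref{theorem:kontny}, delivers ii). Part iii) is the Kronecker lift of Lemma \ref{lemma:bellrecur}, which I would obtain by imitating its univariate proof inside the $D_k \,\cdot\, X_{n+1}$ sandwich: set $n := n+1$ in i), split off the $i=1$ term $\textbf{B}^g_{n,k-1} \otimes g_x$, and recognize the remaining sum as $\tfrac{\partial}{\partial x'}\textbf{B}^g_{n,k}$ through ii), all reorderings being free under the sandwich.

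The main obstacle is the differentiation step in ii). The convention $F_x = F \otimes \tfrac{\partial}{\partial x'}$ forces the freshly created differentiation index into the trailing position, so differentiating an \emph{interior} factor does not leave a base polynomial in place but one whose new index must be commuted back to the slot it differentiated; the crux is to verify that the matrices thereby introduced are exactly the shuffle matrices $K^{\sigma}_r, K^{\sigma^{-1}}_{c}$ of Lemma \ref{eq:invariant_perm}, so that $D_k K^{\sigma^{-1}}_{c} = D_k$ and $K^{\sigma}_r X_{n+1} = X_{n+1}$ and they drop out. This is precisely the mechanism already displayed in the worked $\textbf{B}_{3,2}$ example \eqref{eq:step11}--\eqref{eq:step1n}, where $f_{g^2} L = f_{g^2}$ and $R (dx)^{\otimes 3} = (dx)^{\otimes 3}$.
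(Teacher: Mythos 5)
Your proposal is correct and follows essentially the same route as the paper: expand both sides into base polynomials, absorb the ordering discrepancies via the shuffle matrices of Theorem \ref{theorem:kontny} (which vanish against $D_k$ and $X_n$ by Lemma \ref{eq:invariant_perm}), and reduce the coefficient matching to the univariate identities \eqref{eq:recursion1}--\eqref{eq:recursion2} and Lemma \ref{lemma:bellrecur} --- exactly as the paper proves i) explicitly, obtains ii) by the same argument plus the extended Kronecker product rule \eqref{eq:kronprodrule}, and derives iii) from Lemma \ref{lemma:bellrecur} together with i) and ii). Your closing discussion of the trailing-slot issue created by the convention $F_x = F \otimes \frac{\partial}{\partial x'}$ is precisely the mechanism the paper delegates to the appendix, so nothing essential is missing.
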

\begin{proof}
Here we will prove recurrence relation i) explicitly. Recurrence ii) then follows using the identical arguments and additionally using the extended Kronecker product rule in Appendix \ref{eq:kronprodrule}. The third recurrence then follows from Lemma \ref{lemma:bellrecur} and recurrences i) and ii), as the proof of Lemma \ref{lemma:bellrecur} does not require that the Kronecker product commutes. The general idea is to show that using the previously established results on commutation of Kronecker products in Theorem \ref{theorem:kontny}, these recurrences hold in the multivariate case exactly because they also hold in the univariate case. To start, recall that the partial Bell polynomial can be written as
\begin{equation}
\textbf{B}_{n,k}^g = \sum_j \alpha_j  \tilde{\textbf{B}}^{g}_{n,k,j},
\end{equation}
where $j = \left(j_1,\ldots,j_{n-k+1}\right)$ and $\alpha_j = \frac{n!}{j_1! j_2! \ldots j_{n-k+1}! \; 2^{j_2} 3^{j_3} (n-k+1)^{j_{n-k+1}}}$. Multi-index $j$ iterates over all vectors which satisfy that $\sum_l j_l = k$ and $\sum_l l j_l = n$.
  \vfill
It follows that we can write for all $i \in \left\{1,2,\ldots,n-i-k+2\right\}$
\begin{align}
      \textbf{B}_{n-i,k-1}^g \otimes g_{x^i} & =    \left(\sum_{j^{(i)}} \alpha_{j^{(i)}} \tilde{\textbf{B}}^{g}_{n-i,k-1,j^{(i)}}\right) \otimes g_{x^i}\\
 & = \sum_{j^{(i)}} \alpha_{j^{(i)}} \left( \tilde{\textbf{B}}^{g}_{n-i,k-1,j^{(i)}} \otimes g_{x^i}\right)\\
 & = \sum_{j^{(i)}} \alpha_{j^{(i)}} \left(g_{x}^{\otimes j^{(i)}_1} \otimes g_{x^2}^{\otimes j^{(i)}_2} \otimes \ldots \otimes g_{x^{n-i-k+2}}^{\otimes j^{(i)}_{n-i-k+2}} \otimes g_{x^i} \right),
\end{align}
with $j^{(i)} = \left(j^{(i)}_1,\ldots,j^{(i)}_{n-i-k+2}\right)$ and $\sum_{l} j^{(i)} = k-1$ and $\sum_l l j^{(i)} = n-i$ . For any $i$, Theorem \ref{theorem:dangeli} provides matrices $K_{r_i}^{\sigma}$ and $K_{c_{i,\sigma^{-1}}}^{\sigma^{-1}}$ such that
\begin{align}
& g_{x}^{\otimes j^{(i)}_{1}} \otimes\cdots \otimes g_{x^{n-i-k+2}}(x)^{\otimes j^{(i)}_{n-i-k+2}} \otimes g_{x^i} \\
= &K_{r_i}^{\sigma} \left( g_{x}^{\otimes j^{(i)}_{1}}\otimes \cdots \otimes  g_{x^{i}}^{\otimes (j^{(i)}_{i}+1)}\otimes\cdots \otimes g_{x^{n-i-k+2}}(x)^{\otimes j^{(i)}_{n-i-k+2}} \right) K_{c_{i,\sigma^{-1}}}^{\sigma^{-1}},
\end{align}
where $r_i$ and $c_i$ are the respective vectors of rows and columns.
Because $D_k K_{c_{i,\sigma^{-1}}}^{\sigma^{-1}} = D_k$ and $K_{r_i}^{\sigma} X_n = X_n$ for all $i$, we can apply Theorem \ref{theorem:kontny}, which then delivers for the RHS of recurrence i) the following:

\begin{align}
  &D_k\left[ \sum_{i = 1}^{n-k+1} {n-1 \choose i-1} \left(B_{n-i,k-1}^g \otimes g_{x^i}\right) \right]  X_n \\
=&D_k\left[ \sum_{i = 1}^{n-k+1} {n-1 \choose i-1} \sum_{j^{(i)}} \alpha_{j^{(i)}} \left( g_{x}^{\otimes j^{(i)}_{1}}\otimes \cdots  g_{x^{i}}^{\otimes (j^{(i)}_{i}+1)}\otimes\cdots \otimes g_{x^{n-i-k+2}}(x)^{\otimes j^{(i)}_{n-i-k+2}} \right)\right]  X_n. \\
\end{align}

Now, notice that for each $i$, we can write
\begin{align}
 \tilde{\textbf{B}}^{g}_{n,k,\tilde{j}^{(i)}} & := g_{x}^{\otimes j_{1}}\otimes \cdots  g_{x^{i}}^{\otimes (j_{i}+1)}\otimes\cdots \otimes g_{x^{n-i-k+2}}(x)^{\otimes j_{n-i-k+2}}\\
 & = g_{x}^{\otimes \tilde{j}^{\left(i\right)}_{1}}\otimes \cdots  g_{x^{i}}^{\otimes \tilde{j}^{\left(i\right)}_{i}}\otimes\cdots \otimes g_{x^{n-i-k+2}}(x)^{\otimes \tilde{j}^{\left(i\right)}_{n-i-k+2}}
\end{align}
with $\tilde{j}^{\left(i\right)}$ given by
\begin{equation}
\tilde{j}^{\left(i\right)} = \left(\tilde{j}^{\left(i\right)}_1, \ldots ,\tilde{j}^{\left(i\right)}_i,\ldots \tilde{j}^{\left(i\right)}_{n-k+1 + (1-i)},\textbf{0}\right) = \left(j^{(i)}_1, \ldots ,j^{(i)}_i +1,\ldots j^{(i)}_{n-k+1 + (1-i)},\textbf{0}\right),
\end{equation}
where bold $\textbf{0}$ is a row vector of zeros such that $\tilde{j}^{\left(i\right)}$ is of size $1 \times (n-k+1)$.
It easy to check that for multi-indices $\tilde{j}^{\left(i\right)}$ it now must hold that
\begin{equation}
\sum_{l} \tilde{j}^{\left(i\right)}_l = k \quad \text{and} \quad \sum_{l} l \tilde{j}^{\left(i\right)}_l = n,
\end{equation}
justifying writing the base polynomial as $ \tilde{\textbf{B}}^{g}_{n,k,\tilde{j}^{(i)}}$.
\\\\

Hence, if follows
\begin{align}
  &D_k\left[ \sum_{i = 1}^{n-k+1} {n-1 \choose i-1} \left(g_{x^i} \otimes \textbf{B}_{n-i,k-1}^g\right) \right] X_n \\
=&D_k\left[ \sum_{i = 1}^{n-k+1} {n-1 \choose i-1} \sum_{j^{(i)}} \alpha_{j^{(i)}}  \tilde{\textbf{B}}^{g}_{n,k,\tilde{j}^{(i)}}\right]  X_n\\
=&D_k\left[ \sum_{i = 1}^{n-k+1} \sum_{j^{(i)}} {n-1 \choose i-1}  \alpha_{j^{(i)}} \tilde{\textbf{B}}^{g}_{n,k,\tilde{j}^{(i)}}\right] X_n. \label{eq:thmrhs}
\end{align}

Next, the left-hand-side (LHS) of i) can be written as
\begin{equation}
D_k \textbf{B}_{n,k}^g X_n =  D_k  \sum_{j} \alpha_{j} \tilde{\textbf{B}}^{g}_{n,k,{j}^{}} X_n, \label{eq:thmlhs}
\end{equation}
where $j$ is such that
\begin{equation}
\sum_{l} j_l = k \quad \text{and} \quad \sum_{l} l j_l = n.
\end{equation}

Therefore, by combining equations \eqref{eq:thmlhs} and \eqref{eq:thmrhs} the following must hold for i) to be true
\begin{equation}
D_k  \left[\sum_{j} \alpha_{j} \tilde{\textbf{B}}^{g}_{n,k,{j}^{}}\right] X_n = D_k\left[ \sum_{i = 1}^{n-k+1} \sum_{j^{(i)}} {n-1 \choose i-1}  \alpha_{j^{(i)}} \tilde{\textbf{B}}^{g}_{n,k,\tilde{j}^{(i)}}\right] X_n.
\end{equation}
Since $\tilde{\textbf{B}}^{g}_{n,k,{j}^{}}$ and $\tilde{\textbf{B}}^{g}_{n,k,\tilde{j}^{(i)}}$ have the same matrix size, it would suffice to show that the rearrangement of the double-sum on the RHS leads to the sum on the LHS.

However, we know that exactly this must be the case, since in the univariate case it holds that
\begin{equation}
\sum_{j} \alpha_{j} \tilde{{B}}^{g}_{n,k,{j}^{}} =  \sum_{i = 1}^{n-k+1} \sum_{j^{(i)}} {n-1 \choose i-1}  \alpha_{j^{(i)}}  \tilde{{B}}^{g}_{n,k,\tilde{j}^{(i)}},
\end{equation}
where the non-bold Bell base polynomials $\tilde{{B}}^{g}_{n,k,{j}^{}}$ and $\tilde{{B}}^{g}_{n,k,\tilde{j}^{(i)}}$ represent their univariate counterparts.
The multivariate recurrence relation i) follows.
\end{proof}
%
%
\section{Generalized Multivariate Faà Di Bruno's Formula}
We now prove our multivariate and matrix-valued version of Faà Di Bruno's formula by using the invariance of recurrence relations shown in Proposition \ref{prop:bellequality_multi}.
To do that, we must first identify matrices $D_k$ and $X_n$ for use in Faà Di Bruno.
\begin{lemma}
\label{prepFaa}
For composite functions $\left(f \circ g\right): \R^{n_x} \rightarrow \R^{n_f}$, by setting $D_k = f_{g^k}$ and $X_{n+1} = \left(dx\right)^{\otimes n+1}$ it follows
\begin{equation}
 f_{g^k} \textbf{B}_{n+1,k}^g \left(dx\right)^{\otimes n+1} =  f_{g^k}  \left[ \left(  g_x \otimes \textbf{B}^g_{n,k-1} \right)  +\frac{\partial \textbf{B}^g_{n,k}}{\partial x'}\right] \left(dx\right)^{\otimes n+1}
\end{equation}
\end{lemma}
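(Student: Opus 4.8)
The plan is to read the claimed identity as nothing more than recurrence iii) of Proposition \ref{prop:bellequality_multi} specialized to the two particular objects $D_k = f_{g^k}$ and $X_{n+1} = (dx)^{\otimes(n+1)}$, followed by one reordering of the Kronecker factors in the first summand. Consequently the only genuine content is to verify that these two objects satisfy the invariance hypotheses under which Theorem \ref{theorem:kontny} and Proposition \ref{prop:bellequality_multi} were proved, namely $D_k K^{\sigma^{-1}}_{c_{\sigma^{-1}}} = D_k$ and $K^{\sigma}_r X_{n+1} = X_{n+1}$ for every permutation $\sigma$ of the $k$ base-polynomial factors. Those proofs invoked $D_k$ and $X_n$ only through these two identities, never through the literal form $D^{\otimes k}$, $X^{\otimes n}$, so once the identities are checked the earlier results apply unchanged.

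First I would dispose of $X_{n+1}$. Since $X_{n+1} = (dx)^{\otimes(n+1)} = X^{\otimes(n+1)}$ with $X = dx \in \mathcal{M}_{n_x \times 1}(\R)$ a column vector, it is exactly the situation of Lemma \ref{eq:invariant_perm}: decomposing the $n+1$ identical copies of $dx$ into the $k$ blocks that match the column structure of a base polynomial of $\textbf{B}^g_{n+1,k}$, any shuffle of those blocks leaves $X_{n+1}$ fixed because all factors are identical, so $K^{\sigma}_r X_{n+1} = X_{n+1}$.

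The crux is the analogous statement for $D_k = f_{g^k}$, and here the argument must depart from Lemma \ref{eq:invariant_perm}, since $f_{g^k}$ is an $n_f \times n_y^k$ matrix rather than a literal Kronecker power $D^{\otimes k}$ of a single $1 \times n_y$ vector. I would instead read the invariance off the operator structure $f_{g^k} = f \otimes (\partial/\partial g')^{\otimes k}$. A shuffle matrix $K^{\sigma^{-1}}_{c_{\sigma^{-1}}}$ acts only on the $n_y^k$-dimensional column space, and because that column dimension is supplied entirely by the $k$ identical derivative operators $\partial/\partial g'$ (the factor $f$ contributes only rows, which the shuffle leaves alone), applying it merely permutes the order in which the $k$ partial derivatives are taken. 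By equality of mixed partial derivatives (valid since $f$ is $n$ times differentiable), every entry of $f_{g^k}$ is unchanged, giving $f_{g^k} K^{\sigma^{-1}}_{c_{\sigma^{-1}}} = f_{g^k}$. This is precisely the property already anticipated by the example around \eqref{eq:step22}.

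With both invariance identities in hand, recurrence iii) of Proposition \ref{prop:bellequality_multi} yields the identity but with $(\textbf{B}^g_{n,k-1} \otimes g_x)$ in place of the stated $(g_x \otimes \textbf{B}^g_{n,k-1})$. To finish I would expand $\textbf{B}^g_{n,k-1} = \sum_j \alpha_j \tilde{\textbf{B}}^g_{n,k-1,j}$, so that each term of $\textbf{B}^g_{n,k-1} \otimes g_x$ is a base polynomial of $\textbf{B}^g_{n+1,k}$ carrying $g_x$ in its last slot; moving $g_x$ to the first slot is a cyclic permutation $\sigma \in Sym(k)$, and Theorem \ref{theorem:kontny} gives $f_{g^k}(\tilde{\textbf{B}}^g_{n,k-1,j} \otimes g_x)(dx)^{\otimes(n+1)} = f_{g^k}(g_x \otimes \tilde{\textbf{B}}^g_{n,k-1,j})(dx)^{\otimes(n+1)}$ term by term. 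Summing over $j$ with weights $\alpha_j$ converts $(\textbf{B}^g_{n,k-1} \otimes g_x)$ into $(g_x \otimes \textbf{B}^g_{n,k-1})$ under the sandwich, which is the claimed equation. I expect the verification of $f_{g^k} K^{\sigma^{-1}}_{c_{\sigma^{-1}}} = f_{g^k}$ to be the main obstacle, as it is the one step not covered by the literal hypotheses of the earlier results and rests on the symmetry of higher mixed partials rather than on identical Kronecker factors.
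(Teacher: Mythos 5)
Your proposal is correct, and its skeleton is exactly the paper's: specialize recurrence iii) of Proposition \ref{prop:bellequality_multi} to $D_k = f_{g^k}$ and $X_{n+1} = (dx)^{\otimes (n+1)}$, then verify the two invariance identities that the earlier results require. The genuine difference is how the crucial identity $f_{g^k}K^{\sigma^{-1}}_{c_{\sigma^{-1}}} = f_{g^k}$ is justified. You argue analytically: the shuffle permutes the order in which the $k$ differentiations are taken, so equality of mixed partials fixes every entry of $f_{g^k}$. The paper instead stays entirely inside its formal Kronecker calculus: it writes $f_{g^k} = f \otimes D^{\otimes k}$ with $D = \partial/\partial g'$ treated as a $1 \times n_y$ row vector, applies Lemma \ref{eq:invariant_perm} to $D^{\otimes k}$, and absorbs the extra factor $f$ via $\left(f \otimes D^{\otimes k}\right)K^{\sigma^{-1}}_{c} = \left(f \cdot 1\right) \otimes \left(D^{\otimes k} K^{\sigma^{-1}}_{c}\right) = f \otimes D^{\otimes k}$, using $K^{\sigma^{-1}}_{c} = 1 \otimes K^{\sigma^{-1}}_{c}$. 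So your premise that one ``must depart from Lemma \ref{eq:invariant_perm}'' because $f_{g^k}$ is not a literal Kronecker power is not quite right --- the mixed-product trick keeps the whole argument within that lemma --- but your route makes explicit the analytic fact (commutativity of the differential operators, i.e.\ Schwarz) that the paper's device of pretending $\partial/\partial g'$ is a real vector leaves implicit; in that sense yours is the more self-aware justification, the paper's the more uniform and compact one. Finally, your closing step --- expanding $\textbf{B}^g_{n,k-1}$ into base polynomials and invoking Theorem \ref{theorem:kontny} to move $g_x$ from the last slot to the first, thereby reconciling the $\textbf{B}^g_{n,k-1} \otimes g_x$ of Proposition \ref{prop:bellequality_multi} iii) with the $g_x \otimes \textbf{B}^g_{n,k-1}$ appearing in the lemma's statement --- addresses an ordering discrepancy that the paper's own proof passes over in silence, so on that point your write-up is the more complete of the two.
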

\begin{proof}
The statement follows from Proposition \ref{prop:bellequality_multi} by setting $D_k =  f_{g^k}$ and $X_{n+1}=  \left(dx\right)^{\otimes n+1}$.
Choosing $X = dx$, which is a $n_x \times 1$ column-vector, satisfies the condition in Lemma \ref{eq:invariant_perm}. For $D_k = f_{g^k}$ we can write the derivative as
\begin{equation}
f_{g^k} = f \otimes \underbrace{{\partial \over \partial g'} \otimes \cdots \otimes {\partial \over \partial g'}}_{k \quad \text{times}}.
\end{equation}
Setting $D = {\partial \over \partial g'}$, we get $f \otimes D^{\otimes k}$. We know that any permutation of base polynomials in $ \textbf{B}_{n+1,k}^g$ will be invariant with respect to $D^{\otimes k}$. However the same holds with respect to $f \otimes D^{\otimes k}$. This follows from Lemma \ref{eq:invariant_perm} with
\begin{align}
f \otimes D^{\otimes k} &= f \otimes D^{\otimes k} K^{\sigma^{-1}}_{c} \\
&= \left(f\cdot 1\right) \otimes \left(D^{\otimes k} K^{\sigma^{-1}}_{c}\right)\\
 & = \left(f \otimes D^{\otimes k}\right)\left(1 \otimes K^{\sigma^{-1}}_{c}\right)\\
& = \left(f \otimes D^{\otimes k}\right) K^{\sigma^{-1}}_{c}.
\end{align}

\end{proof}
Taking everything together that we have shown so far, multivariate Faà Di Bruno now follows using a similar proof of induction as in the univariate case.
\begin{theorem}[Multivariate Faà Di Bruno]
For some $n,k \in \mathbb{N}$ with $n \geq k$ and function $g:\R^{n_x} \rightarrow \R^{n_y}$ which is at least $n-k+1$ differentiable on some domain $D_g \subset \R^{n_x}$ and a function $f:\R^{n_y} \rightarrow \R^{n_f} $ which is at least $n$ times differentiable on some domain $D_f \subset \R^{n_y}$, the $n$-th order total derivative of $f \circ g$ is given by Faà Di Bruno's formula:
\begin{align}\label{FaaDiBruno-Bell-multivariate}
d^{n} f(g(x))
= \sum_{k=1}^{n} f_{g^{k}}(g(x)) \textbf{B}^g_{n,k}  (dx)^{\otimes n},
\end{align}
 where $f_{g^{k}}(g(x))$ denotes the $n_f \times n_{y}^{k}$ matrix of the $k$th-order partial derivatives of $f(\cdot)$ and similarly $g_{x^{l}}(x)$ denotes the $n_{y} \times n_{x}^{l}$ matrix of the $l$th-order partial derivatives of $g(\cdot)$.
\end{theorem}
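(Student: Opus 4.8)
The plan is to argue by induction on $n$, transcribing the univariate proof almost verbatim with scalar products replaced by Kronecker products, and with the one essential twist that the non-commuting recurrence for $\textbf{B}^g_{n,k}$ may only be invoked after the expression has been sandwiched between $f_{g^k}$ on the left and $(dx)^{\otimes n+1}$ on the right. The base case $n=1$ is immediate: the chain rule for differentials gives $d f(g(x)) = f_g(g(x))\, dg = f_g(g(x))\, g_x\, dx$, and since $\textbf{B}^g_{1,1} = g_x$ this is exactly $\sum_{k=1}^{1} f_{g^k}\,\textbf{B}^g_{1,k}\,(dx)$.

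For the inductive step I would assume the formula at order $n$ and write $d^{n+1} f(g(x)) = d\,[\,\sum_{k=1}^{n} f_{g^k}\,\textbf{B}^g_{n,k}\,](dx)^{\otimes n}$, treating the differentials already introduced as fixed. Applying the product rule, and differentiating $f_{g^k}$ (a function of $g$) via the chain rule so that it brings down $f_{g^{k+1}}$ together with a factor $g_x$ from $dg = g_x\,dx$, each summand splits into two families of terms: those in which $f_{g^k}$ is differentiated, of the form $f_{g^{k+1}}(g_x \otimes \textbf{B}^g_{n,k})(dx)^{\otimes n+1}$, and those in which $\textbf{B}^g_{n,k}$ is differentiated, of the form $f_{g^k}\,\frac{\partial \textbf{B}^g_{n,k}}{\partial x'}\,(dx)^{\otimes n+1}$. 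The key point is to keep $g_x$ to the \emph{left} of $\textbf{B}^g_{n,k}$ so that the factor ordering matches Lemma \ref{prepFaa}.

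Next I would reindex the first family by $k \mapsto k-1$ and use the conventions $\textbf{B}^g_{n,0} = \textbf{B}^g_{n,n+1} = 0$ to run both sums over $k = 1,\ldots,n+1$, after which the common factor $f_{g^k}$ pulls out to give $\sum_{k=1}^{n+1} f_{g^k}\,[\,(g_x \otimes \textbf{B}^g_{n,k-1}) + \frac{\partial \textbf{B}^g_{n,k}}{\partial x'}\,](dx)^{\otimes n+1}$. It then remains to identify the bracket with $\textbf{B}^g_{n+1,k}$. This is exactly the content of Lemma \ref{prepFaa} (the specialization of Proposition \ref{prop:bellequality_multi}(iii) to $D_k = f_{g^k}$ and $X_{n+1} = (dx)^{\otimes n+1}$), which asserts $f_{g^k}\,\textbf{B}^g_{n+1,k}\,(dx)^{\otimes n+1} = f_{g^k}\,[\,(g_x \otimes \textbf{B}^g_{n,k-1}) + \frac{\partial \textbf{B}^g_{n,k}}{\partial x'}\,](dx)^{\otimes n+1}$; substituting term by term collapses the sum to $\sum_{k=1}^{n+1} f_{g^k}\,\textbf{B}^g_{n+1,k}\,(dx)^{\otimes n+1}$, closing the induction.

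I expect the main obstacle to be conceptual rather than computational, and it is precisely the one the earlier sections were built to remove: as a bare matrix identity $\textbf{B}^g_{n+1,k} \neq (g_x \otimes \textbf{B}^g_{n,k-1}) + \frac{\partial \textbf{B}^g_{n,k}}{\partial x'}$, since the Kronecker product does not commute, so the recurrence cannot simply be plugged in as in the univariate argument. What makes the induction go through is that every step is carried out in the sandwiched form $f_{g^k}\,(\,\cdot\,)(dx)^{\otimes n+1}$, in which the offending reorderings are absorbed by the shuffle matrices of Theorem \ref{theorem:kontny} and hence leave the quantity unchanged; the remaining work is the routine bookkeeping of differentials, factor ordering, and index shifts described above.
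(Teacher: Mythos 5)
Your proposal is correct and follows essentially the same route as the paper's own proof: induction on $n$, splitting $d^{n+1}f(g(x))$ via the differential product and chain rules into the $f_{g^{k+1}}(\cdot \otimes \cdot)$ and $f_{g^k}\,\partial\textbf{B}^g_{n,k}/\partial x'$ families, reindexing with the conventions $\textbf{B}^g_{n,0}=\textbf{B}^g_{n,n+1}=0$, and closing the induction with Lemma \ref{prepFaa}, invoking the recurrence only in sandwiched form. The only cosmetic difference is that you keep $g_x$ on the left of $\textbf{B}^g_{n,k}$ whereas the paper's computation produces $\textbf{B}^g_{n,k}\otimes g_x$; both orderings agree under the sandwich by Theorem \ref{theorem:kontny}, exactly as you note.
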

\begin{proof} We prove the multivariate case again by induction. Using the apparatus set up in this paper, the proof follows closely the univariate one. We leave the induction beginning to the reader and directly start by assuming that \eqref{FaaDiBruno-Bell-multivariate} holds for some arbitrary $n \in \mathbb{N}$. Then it follows using the differential version of the product rule in Appendix A \eqref{eq:diffproduct} and the differential version of the chain rule in Appendix A \eqref{eq:diffchain}:
\begin{align}
d^{n+1} f(g(x)) &= d ( d^n f(g(x)) ) \\
& = d(\sum_{k=1}^{n} f_{g^{k}}(g(x)) \textbf{B}^g_{n,k}  (dx)^{\otimes n}) \\
& = \sum_{k=1}^{n} d( f_{g^{k}}(g(x)) \textbf{B}^g_{n,k}  (dx)^{\otimes n}) \\
& = \sum_{k=1}^{n} f_{g^{k+1}}\left(I_{n_y^k} \otimes g_x\right) \left(\textbf{B}^g_{n,k}  (dx)^{\otimes n} \otimes dx\right) + f_{g^k} \frac{\partial \textbf{B}^g_{n,k}}{\partial x'} \left(dx\right)^{\otimes n+1}\\
& = \sum_{k=1}^{n} f_{g^{k+1}} \left(\textbf{B}^g_{n,k} \otimes g_x\right)\left(dx\right)^{\otimes n+1} + f_{g^k} \frac{\partial \textbf{B}^g_{n,k}}{\partial x'} \left(dx\right)^{\otimes n+1}
\end{align}
By factoring for $f_{g^{k}}\left(g(x)\right)$, re-parameterizing the sums and noting that $\textbf{B}^g_{n,0}\left(\cdot\right) = \textbf{B}^g_{n,n+1}\left(\cdot\right)$ = 0, we get
\begin{align}
& = \sum_{k=1}^{n+1} f_{g^{k}} \left[ \left(\textbf{B}^g_{n,k-1} \otimes g_x\right)+\frac{\partial \textbf{B}^g_{n,k}}{\partial x'}\right] \left(dx\right)^{\otimes n+1}.
\end{align}
Lemma \ref{prepFaa} then provides, that for each $k$
\begin{equation}
  \sum_{k=1}^{n+1} f_{g^{k}}  \textbf{B}^g_{n+1,k}\left(dx\right)^{\otimes n+1} = \sum_{k=1}^{n+1} f_{g^{k}} \left[ \left(\textbf{B}^g_{n,k-1} \otimes g_x\right)+\frac{\partial \textbf{B}^g_{n,k}}{\partial x'}\right] \left(dx\right)^{\otimes n+1}
\end{equation}
holds, which completes the proof.
\end{proof}
\section{Applications}

\subsection{Computing Moments of Multivariate Distributions}
For a $k \times 1$ random variable $X$ we can compute its moments using the moment generating function (MGF)
\begin{equation}
M_X\left(t\right)  = E_X \left[\exp{t' X}\right],
\end{equation}
where $t$ is a $k \times 1$ vector.
The resulting series expansion can then be written as
\begin{equation}
M_X\left(t\right) = \sum_{k=0}^{\infty} {1 \over k!} m_k t^{\otimes k},
\end{equation}
with $m_n = E\left[\left(X'\right)^{\otimes n}\right]$.
The moments of order $n$ are then given by
\begin{equation}
m_n = \left.{\partial^n M_X \over \partial t^{\otimes n}}\right|_{t=\textbf{0}}.
\end{equation}


\paragraph{Example: (Multivariate Normal Distribution)}
For a $k \times 1$ random variable $X$ which is normally distributed with mean vector $\mu$ and covariance matrix $\Sigma$, the multivariate MGF can be computed to be
\begin{equation}
M_X\left(t\right) = \exp{\left(t'\mu+{1\over 2} t' \Sigma t\right)}.
\end{equation}
To compute its $n$-th derivative we again make use of our novel version of Faà Di Bruno's formula given by
\begin{align}
d^{n} (f \circ g)
= \sum_{k=1}^{n} f_{g^{k}}(g(x)) \textbf{B}^g_{n,k}  (dx)^{\otimes n}.
\end{align}
However, in this case we will make use not of the differential form but of the resulting matrix of partial derivatives given by
\begin{equation}
{\partial^n (f \circ g) \over \partial (x')^{\otimes n}} = \sum_{k=1}^{n} f_{g^{k}}(g(x)) \textbf{B}^g_{n,k}.
\end{equation}
Note that this representation might not be a unique representation for the matrix of derivatives.\footnote{For more details on this issue, see Appendix B.} We will deal with this issue towards the end of this application.

Now, setting $f(y) = \exp{y}$ with $f:\R \rightarrow \R$ and $g(t) =t'\mu+{1\over 2} t' \Sigma t$ with $g:\R^k \rightarrow \R$, we get $f_{g^j} = \exp{(g)}$ for all $j$ and
\begin{align}
 g_t  &= \mu' + {1 \over 2} {d \left(t' \Sigma t\right) \over dt} \\
g_{t^j}  &= {1 \over 2} {d^j \left(t' \Sigma t\right) \over dt^j}  \quad \text{for all} \quad j \geq 2.
\end{align}
Evaluated at $t=0$, we get $f_{g^j} = 1$ and therefore we can write moments using
\begin{equation}
m_n =\sum_{j=1}^n \left.\textbf{B}_{n,j}^{g\left(t\right)}\right|_{t=0}.
\end{equation}
Since only derivatives of $g$ up to the second order remain non-zero when evaluated at $t=0$ , i.e.
\begin{align}
 \left. g_t \right|_{t=0}  &= \mu' \\
 \left. g_{t^2} \right|_{t=0} &= vec\left(\Sigma\right)' := \left(\sigma_{11}, \sigma_{12}, \sigma_{12}, \sigma_{22}\right) \\
  \left. g_{t^j} \right|_{t=0} &= 0  \quad \text{for all} \quad j \geq 3,
\end{align}
the sum reduces to
\begin{equation}
m_n =\sum_{j=0}^{\left\lfloor{{n \over 2}}\right\rfloor} \textbf{B}_{n,n-j}\left(\mu',vec\left(\Sigma\right)',0,\cdots,0\right).
\end{equation}
Therefore we arrive at
\begin{equation}
m_n =\sum_{j=0}^{\left\lfloor{{n \over 2}}\right\rfloor} {n! \over (n-2j)! (j)! 2^j} \left((\mu')^{\otimes (n-2j)} \otimes \left(vec\left(\Sigma\right)'\right)^{\otimes j}\right).
\end{equation}
Notice that for centered moments with $\mu = 0$, we have to evaluate the expression only for $j = {n \over 2}$. Hence get for even $n$
\begin{equation} \label{eq:everskontny_moments}
m_n = {n! \over {n \over 2}! 2^{n \over 2}} vec(\Sigma)'^{\otimes {n \over 2}}.
\end{equation}
These expressions are almost identical to \cite{holmquist1988moments} except for his use of the symmetrizer matrix $S_{k,n}$ and the use of columns vectors. When defining the representation based on row vectors, the moment vector in \cite{holmquist1988moments} is given by
\begin{equation}
m_n =\sum_{j=0}^{\left\lfloor{{n \over 2}}\right\rfloor} {n! \over (n-2j)! (j)! 2^j} \left((\mu')^{\otimes (n-2j)} \otimes \left(vec\left(\Sigma\right)'\right)^{\otimes j}\right) S_{k,n}.
\end{equation}

In essence the symmetrizer matrix ensures that each component of the moment vector represents the exact value of one of the many possible scalar moments.
To illustrate this point and therefore the relevance of the symmetrizer matrix, we compute $m_4$ with $\mu = 0$ and $k=2$ using equation \eqref{eq:everskontny_moments}, which yields
\begin{equation}
m_4 = 3 vec(\Sigma)'^{\otimes {2}}.
\end{equation}
In the series expansion of the moment generating function we get
\begin{equation}
M_X\left(t\right) = \sum_{k=1}^{\infty} {1 \over k!} m_k t^{\otimes k}.
\end{equation}
Therefore, we show the moment vector $m_4$ together with the corresponding vector $t^{\otimes 4}$:
\begin{equation}
m_4=\left(\begin{array}{c} 3\,{\sigma_{11}}^2\\ 3\,\sigma_{11}\,\sigma_{12}\\ 3\,\sigma_{11}\,\sigma_{12}\\ 3\,\sigma_{11}\,\sigma_{22}\\ 3\,\sigma_{11}\,\sigma_{12}\\ 3\,{\sigma_{12}}^2\\ 3\,{\sigma_{12}}^2\\ 3\,\sigma_{12}\,\sigma_{22}\\ 3\,\sigma_{11}\,\sigma_{12}\\ 3\,{\sigma_{12}}^2\\ 3\,{\sigma_{12}}^2\\ 3\,\sigma_{12}\,\sigma_{22}\\ 3\,\sigma_{11}\,\sigma_{22}\\ 3\,\sigma_{12}\,\sigma_{22}\\ 3\,\sigma_{12}\,\sigma_{22}\\ 3\,{\sigma_{22}}^2 \end{array}\right)', \quad  t^{\otimes 4} = \left(\begin{array}{c} {t_{1}}^4\\ {t_{1}}^3\,t_{2}\\ {t_{1}}^3\,t_{2}\\ {t_{1}}^2\,{t_{2}}^2\\ {t_{1}}^3\,t_{2}\\ {t_{1}}^2\,{t_{2}}^2\\ {t_{1}}^2\,{t_{2}}^2\\ t_{1}\,{t_{2}}^3\\ {t_{1}}^3\,t_{2}\\ {t_{1}}^2\,{t_{2}}^2\\ {t_{1}}^2\,{t_{2}}^2\\ t_{1}\,{t_{2}}^3\\ {t_{1}}^2\,{t_{2}}^2\\ t_{1}\,{t_{2}}^3\\ t_{1}\,{t_{2}}^3\\ {t_{2}}^4 \end{array}\right).
\end{equation}
The elements of $m_4$ corresponding to a specific element of $t^{\otimes 4}$ should deliver the corresponding individual scalar moment. So for example, the entry of $m_4$ corresponding to $t_1^2 t_2^2$ should be equal to the scalar moment $E\left[x_1^2 x_2^2\right]$. However, as the entries of $t_1^2 t_2^2$ appear multiple times, one can notice that the corresponding entries in $m_4$ aren't identical. To arrive at the correct moment we average over all values appearing with $t_1^2 t_2^2$. By averaging over these entries, we get
\begin{equation}
m_4 =  \left(\begin{array}{c} 3\,{\sigma_{11}}^2\\ 3\,\sigma_{11}\,\sigma_{12}\\ 3\,\sigma_{11}\,\sigma_{12}\\ 2\,{\sigma_{12}}^2+\sigma_{11}\,\sigma_{22}\\ 3\,\sigma_{11}\,\sigma_{12}\\ 2\,{\sigma_{12}}^2+\sigma_{11}\,\sigma_{22}\\ 2\,{\sigma_{12}}^2+\sigma_{11}\,\sigma_{22}\\ 3\,\sigma_{12}\,\sigma_{22}\\ 3\,\sigma_{11}\,\sigma_{12}\\ 2\,{\sigma_{12}}^2+\sigma_{11}\,\sigma_{22}\\ 2\,{\sigma_{12}}^2+\sigma_{11}\,\sigma_{22}\\ 3\,\sigma_{12}\,\sigma_{22}\\ 2\,{\sigma_{12}}^2+\sigma_{11}\,\sigma_{22}\\ 3\,\sigma_{12}\,\sigma_{22}\\ 3\,\sigma_{12}\,\sigma_{22}\\ 3\,{\sigma_{22}}^2 \end{array}\right)',
\end{equation}
which implies $E\left[x_1^2 x_2^2\right] = 2 \sigma_{12}^2 + \sigma_{11} \sigma_{22}$.
This moment vector $m_4$ is now identical to the one computed using the formula in \cite{holmquist1988moments}.
The symmetrizer matrix in \cite{holmquist1988moments} ensures that all entries that correspond to the same individual scalar moment deliver the exact same value. The fundamental issue here, is that the matrix of derivative is not uniquely defined by
\begin{equation}
{\partial^n (f \circ g) \over \partial (x')^{\otimes n}}  = \sum_{k=1}^{n} f_{g^{k}}(g(x)) \textbf{B}^g_{n,k}.
\end{equation}
Instead by using
\begin{equation}
{\partial^n (f \circ g) \over \partial (x')^{\otimes n}} = \sum_{k=1}^{n} f_{g^{k}}(g(x)) \textbf{B}^g_{n,k} S_{n_x,n},
\end{equation}
where $S_{n,k}$ is the symmetrizer matrix for n vectors of size $n_x \times 1$, we would arrive at a representation identical to \cite{holmquist1988moments}.\footnote{See Appendix B for more details.} In practice, the computation of the full symmetrizer matrix might not be necessary. In our application, it would suffice to identify the elements of $m_n$ which are not identical with respect to the corresponding elements of $t^{\otimes n}$ and then averaging over all realizations.
\section{Concluding Remarks}
In this paper, we introduce a novel generalization of the partial Bell polynomial. This generalization is similar in structure to the univariate version, except for the replacement of the scalar multiplication operation with the Kronecker product. This extension present us with several challenges, because the Kronecker product doesn't generally commute. Nevertheless, we are able to show that under a simple transformation well-known recurrences relations for the partial Bell polynomial continue to hold. It is exactly because of that insight that we can prove a generalization of Faà Di Bruno's formula which is again similar in structure to the univariate version. In our application we show that using such a representation of higher-order composite derivatives, the computation of higher-order moments of the multivariate normal distribution becomes a simple task. This representation has further potential applications in multivariate statistics and in the theory and computation of dynamic stochastic general equilibrium models in economics, which we attempt to explore in the future.

\newpage

\bibliographystyle{Econometrica}
\bibliography{BellCalculus}

\newpage
\section*{Appendix}

\subsection*{A. Derivatives and Differentials}
In this section, we present and derive representations and derivative rules vector- and matrix-valued derivatives and differentials. In that we mostly follow \cite{magnus_matrix_1999} and \cite{rogers1980matrix}.

We start by defining the vector- and matrix-valued representations of derivative and differentials. For this purpose, let matrices $F,G,H$ be functions of a vector $x$ of size $n_x \times 1$. The matrix $F$ is of size $s \times t$, $G$ of size $y \times 1$, and $H$ of size $p \times q$. We define a differential operator ${\partial \over \partial x'}$ w.r.t the transpose of $x$ by
\begin{equation}
{\partial \over \partial x'} = \left({\partial \over \partial x_1},\ldots,{\partial \over \partial x_{n_x}} \right).
\end{equation}
First-order derivatives of matrices and vectors are then defined by
\begin{equation}
F_x = {\partial F \over \partial x'} := F \otimes {\partial  \over \partial x'}.
\end{equation}
Analogously, higher-order derivatives are defined by
\begin{equation}
 F_{x^k} = {\partial F \over \partial (x')^{\otimes k}} := F \otimes \left({\partial  \over \partial x'}\right)^{\otimes k}.
\end{equation}
This notation implies some structure on the ordering of partial derivative within ${\partial F \over \partial x'}$.\footnote{See \cite{tracy1993higher}, \cite{sultan1996moments}, and \cite{rogers1980matrix} who also make use of this notation.} More explicitly this structure corresponds to the replacement of each component $F_{i,j}$ of $F$ by its respective row vector of derivatives, i.e.
\begin{equation}
{\partial F_{i,j} \over \partial x'} = \left({\partial F_{i,j} \over \partial x_1},\ldots,{\partial F_{i,j} \over \partial x_{n_x}} \right).
\end{equation}

When F is a vector, denoted by $f$ in the following, we can write the corresponding first-order differential:

\paragraph{The First-order Differential} Let $f(x)$ be a differentiable $s \times 1$ vector function of an $n_x \times 1$ vector of real variables $x$. Then we can use the notation of the derivative matrix to obtain
\begin{align*}
d f = \frac{\partial f}{\partial x}dx=f_{x}dx
\end{align*}
as the first differential of $f(\cdot)$ at $x$ with increment $dx$. In more detail,
\begin{align*}
d f(x) = \begin{bmatrix}d f_{1}(x)\\d f_{2}(x)\\\vdots\\d f_{s}(x)\end{bmatrix} &=
\begin{bmatrix}
\frac{\partial f_1(x)}{\partial x_{1}}dx_{1} + \frac{\partial f_1(x)}{\partial x_{2}}dx_{2} + \dots +\frac{\partial f_1(x)}{\partial x_{n_{x}}}dx_{n_{x}}\\
\frac{\partial f_2(x)}{\partial x_{1}}dx_{1} + \frac{\partial f_2(x)}{\partial x_{2}}dx_{2} + \dots +\frac{\partial f_2(x)}{\partial x_{n_{x}}}dx_{n_{x}}\\
\vdots \\
\frac{\partial f_{s}(x)}{\partial x_{1}}dx_{1} + \frac{\partial f_{s}(x)}{\partial x_{2}}dx_{2} + \dots +\frac{\partial f_{s}(x)}{\partial x_{n_{x}}}dx_{n_{x}}\\
\end{bmatrix}\\
&=
\begin{bmatrix}
\frac{\partial f_1(x)}{\partial x_{1}} & \frac{\partial f_1(x)}{\partial x_2} & \dots & \frac{\partial f_1(x)}{\partial x_{n_x}} \\
\frac{\partial f_2(x)}{\partial x_{1}} & \ddots								&				& \vdots								\\
\vdots								&											& \ddots&	\vdots								\\
\frac{\partial f_s(x)}{\partial x_{1}} & \dots								&\dots  & \frac{\partial f_s(x)}{\partial x_{n_x}}
\end{bmatrix}
\begin{bmatrix}
dx_{1}\\
dx_{2}\\
\vdots\\
dx_{n_{x}}
\end{bmatrix}\end{align*}
Using a more concise notation we get,
\begin{align*}
d f &=
\begin{bmatrix}
f_{1,x}\\
f_{2,x}\\
\vdots\\
f_{s,x}
\end{bmatrix}dx
= f_{x}d_{x}.
\end{align*}

\paragraph{The Second-order Differential} Let $f(x)$ be a twice differentiable $s \times 1$ vector function of an $n_{x} \times 1$ vector of real variables $x$. Then we can use the notation of the derivative matrix to obtain
\begin{align*}
d^2 f(x) = \frac{\partial \left(\frac{\partial f(x)}{\partial x}dx\right)}{\partial x}dx=\frac{\partial^2 f(x)}{\partial x^2}dx^{\otimes 2}=f_{x^2}dx^{\otimes 2}
\end{align*}
as the second differential of $f(\cdot)$ at $x$ with increment $dx$. In more detail,
\begin{align*}
d^2 f(x) & = d (d f(x)) = \begin{bmatrix} d (d f_{1}(x))\\d (d f_{2}(x))\\\vdots\\d (d f_{s}(x))\end{bmatrix}=
\begin{bmatrix}
d (f_{1,x}dx )\\
d (f_{2,x}dx )\\
\vdots\\
d (f_{s,x}dx )
\end{bmatrix}
=
\begin{bmatrix}
    \begin{bmatrix}df_{1,x_{1}} & df_{1,x_{2}} & \dots & df_{1,x_{n_{x}}}\end{bmatrix} dx\\
    \begin{bmatrix}df_{2,x_{1}} & df_{2,x_{2}} & \dots & df_{2,x_{n_{x}}}\end{bmatrix} dx\\
\vdots\\
    \begin{bmatrix}df_{s,x_{1}} & df_{s,x_{2}} & \dots & df_{s,x_{n_{x}}}\end{bmatrix} dx
\end{bmatrix}\\[5ex]
&=
\begin{bmatrix}
f_{1,x_{1},x}dx & f_{1,x_{2},x}dx & \dots & f_{1,x_{n_{x}},x}dx\\
f_{2,x_{1},x}dx & f_{2,x_{2},x}dx & \dots & f_{2,x_{n_{x}},x}dx\\
\vdots&\vdots&\ddots&\vdots\\
f_{s,x_{1},x}dx & f_{s,x_{2},x}dx & \dots & f_{s,x_{n_{x}},x}dx\\
\end{bmatrix}dx\\[5ex]
&=
\begin{bmatrix}
f_{x_{1},x}dx & f_{x_{2},x}dx & \dots & f_{x_{n_{x}},x}dx
\end{bmatrix}dx\\[5ex]
&=
\begin{bmatrix}
f_{x_{1},x} & f_{x_{2},x} & \dots & f_{x_{n_{x}},x}
\end{bmatrix}
\begin{bmatrix}
dx & \operatorname*{0}\limits_{n_{x}\times 1} & \dots & \operatorname*{0}\limits_{n_{x}\times 1}\\
\operatorname*{0}\limits_{n_{x}\times 1} & dx & \dots & \operatorname*{0}\limits_{n_{x}\times 1}\\
\vdots & \vdots & \ddots & \cdots\\
\operatorname*{0}\limits_{n_{x}\times 1}&\operatorname*{0}\limits_{n_{x}\times 1}& \dots& dx
\end{bmatrix}dx \\[5ex]
&=
f_{x,x}\left(I_{n_{x}}\otimes dx \right)dx
=
f_{x^2}\left(dx \otimes dx \right)=
f_{x^2}dx^{\otimes 2}.
\end{align*}

\paragraph{The $k$th-order Differential} Let $f(x)$ be a $k$ times differentiable $s \times 1$ vector function of an $n_{x} \times 1$ vector of real variables $x$. Then
\begin{align*}
d^k f(x) = \frac{\partial \left(\frac{\partial^{k-1} f(x)}{\partial x^{k-1}}dx^{\otimes k-1}\right)}{\partial x}dx=\frac{\partial^k f(x)}{\partial x^k}dx^{\otimes k}=f_{x^k}dx^{\otimes k}
\end{align*}
as the $k$-order differential of $f(\cdot)$ at $x$ with increment $dx$.  In more detail,
\begin{align*}
d^k f(x) & = d (d^{k-1} f(x)) = \begin{bmatrix} d (d^{k-1} f_{1}(x))\\d (d^{k-1} f_{2}(x))\\\vdots\\d (d^{k-1} f_{s}(x))\end{bmatrix}=
\begin{bmatrix}
d (f_{1,x^{k-1}}dx^{\otimes k-1} )\\
d (f_{2,x^{k-1}}dx^{\otimes k-1} )\\
\vdots\\
d (f_{s,x^{k-1}}dx^{\otimes k-1} )
\end{bmatrix}\\[5ex]
&=
\begin{bmatrix}
    \begin{bmatrix}df_{1,x_{1},x^{k-1}} & df_{1,x_{2},x^{k-1}} & \dots & df_{1,x_{n_{x},x^{k-1}}}\end{bmatrix} dx^{\otimes k-1}\\
    \begin{bmatrix}df_{2,x_{1},x^{k-1}} & df_{2,x_{2},x^{k-1}} & \dots & df_{2,x_{n_{x},x^{k-1}}}\end{bmatrix} dx^{\otimes k-1}\\
\vdots\\
    \begin{bmatrix}df_{s,x_{1},x^{k-1}} & df_{s,x_{2},x^{k-1}} & \dots & df_{s,x_{n_{x},x^{k-1}}}\end{bmatrix} dx^{\otimes k-1}
\end{bmatrix}\\[5ex]
&=
\begin{bmatrix}
    \begin{bmatrix}f_{1,x_{1},x^{k-1}}dx & f_{1,x_{2},x^{k-1}}dx & \dots & f_{1,x_{n_{x}},x^{k-1}}dx\end{bmatrix}\\
    \begin{bmatrix}f_{2,x_{1},x^{k-1}}dx & f_{2,x_{2},x^{k-1}}dx & \dots & f_{2,x_{n_{x}},x^{k-1}}dx\end{bmatrix}\\
\vdots\\
    \begin{bmatrix}f_{s,x_{1},x^{k-1}}dx & f_{s,x_{2},x^{k-1}}dx & \dots & f_{s,x_{n_{x}},x^{k-1}}dx\end{bmatrix}\\
\end{bmatrix}dx^{\otimes k-1}\\[5ex]
&=
\begin{bmatrix}
f_{x_{1},x^{k-1}}dx & f_{x_{2},x^{k-1}}dx & \dots & f_{x_{n_{x}},x^{k-1}}dx
\end{bmatrix}dx^{\otimes k-1}\\[5ex]
&=
\begin{bmatrix}
f_{x_{1},x^{k-1}} & f_{x_{2},x^{k-1}} & \dots & f_{x_{n_{x}},x^{k-1}}
\end{bmatrix}
\begin{bmatrix}
dx & \operatorname*{0}\limits_{n_{x}\times 1} & \dots & \operatorname*{0}\limits_{n_{x}\times 1}\\
\operatorname*{0}\limits_{n_{x}\times 1} & dx & \dots & \operatorname*{0}\limits_{n_{x}\times 1}\\
\vdots & \vdots & \ddots & \cdots\\
\operatorname*{0}\limits_{n_{x}\times 1}&\operatorname*{0}\limits_{n_{x}\times 1}& \dots& dx
\end{bmatrix}dx^{\otimes k-1} \\[5ex]
&=
f_{x,x^{k-1}}\left(I_{n_{x}^{k-1}}\otimes dx \right)dx^{\otimes k-1}
=
f_{x^k}\left(dx^{\otimes k-1} \otimes dx \right)=
f_{x^k}dx^{\otimes k}.
\end{align*}

Similarly, we can define a matrix differential by
\paragraph{Matrix Differential}
\begin{equation}
dF = {\partial F \over \partial x'}\left(I_t \otimes dx\right).
\end{equation}

We can now present rules of differentiation. We omit the addition of rule as it is identical to the univariate case.
\paragraph{Product Rule}
If the matrix product $FH$ exists, i.e. $t=p$, then
\begin{equation}
{\partial FH \over \partial x'} = {\partial F  \over \partial x'}\left(H \otimes I_{n_x}\right) + F {\partial H \over \partial x'}.
\end{equation}
\paragraph{Kronecker Product Rule}
\begin{align}
{\partial\left( F \otimes H\right) \over \partial x'} = & F \otimes {\partial H  \over \partial x'} + K_{s,p}\left( H \otimes {\partial F \over \partial x'}\right)\left(K_{q,t} \otimes I_{n_x}\right) \\
= & F \otimes {\partial H  \over \partial x'} + \left({\partial F \over \partial x'} \otimes H\right) \left(I_t \otimes K_{n_x,q} \right)
\end{align}
The second equality follows either from the application of the commutation of the matrices in the parentheses and some computation using commutation matrices, or by applying the extended product rule for Kronecker products below to the case of $n = 2$.

\paragraph{Extended Kronecker Product Rule}

\begin{align}
\label{eq:kronprodrule}
 {\partial \left(A_1 \otimes \ldots \otimes A_n\right) \over \partial x'} = & \left({\partial A_1 \over \partial x'} \otimes \bar{A}_{2:n}\right)\left(I_{s_1} \otimes K_{n_x,\prod_{j=2}^n s_j}\right) \\
 &\ldots + \sum_{k=2}^{n-1} \left(\bar{A}_{1:k-1}\otimes {\partial A_k \over \partial x'} \otimes \bar{A}_{k+1:n}\right) \left(I_{\prod_{j = 1}^k s_j} \otimes K_{n_x,\prod_{j=k+1}^n s_j}\right) \\
&\ldots + A_{1:n-1} \otimes {\partial A_n \over \partial x'}
\end{align}
with $A_{k:l} := A_k \otimes \cdots \otimes A_l$ and $A_i \in \mathcal{M}_{r_i \times s_i}$ for all $i$.

\begin{proof}
In the following $A_{i:j}$ will refer to the Kronecker product of all matrices $A_i$ though $A_j$ with $i < j$.
A typical element of $A_{1:n}$ is of the form
\begin{equation}
a^{(1)}_{i_1,j_1} \cdot a^{(2)}_{i_2,j_2}\cdot \ldots \cdot a^{(n)}_{i_n,j_n},
\end{equation}
where $\cdot$ is the multiplication operator acting on scalars and $A_k = \left(a^{(k)}_{i_k,j_k}\right)$.
The definition of matrix derivatives implies via
\begin{equation}
A_{1:n} \otimes {\partial \over \partial x'},
\end{equation}
that we can apply the product rule for scalars on each component individually. Therefore we get
\begin{align}
{\partial (a^{(1)}_{i_1,j_1} \cdot a^{(2)}_{i_2,j_2}\cdot \ldots \cdot a^{(n)}_{i_n,j_n}) \over \partial x'} &= \sum_{k=1}^n a^{(1)}_{i_1,j_1} \cdot \ldots \cdot {\partial a^{(k)}_{i_k,j_k} \over \partial x'} \cdot \ldots \cdot a^{(n)}_{i_n,j_n}.
\end{align}
This representation implies that we can compute the derivate of $A_{1:n}$ by computing $n$ individual term, where we keep all components constant, instead for the matrix we want to the take the derivative of. Let $\bar{A_{\cdot}}$ denote a matrix which is to be treated as a constant when taking the derivative.
\begin{equation}
{\partial \left(A_1 \otimes \ldots \otimes A_n\right) \over \partial x'} = {\partial(A_1 \otimes \bar{A}_{2:n}) \over \partial x'}+		 \sum_{k=2}^{n-1} {\partial\left(\bar{A}_{1:k-1}\otimes A_k \otimes \bar{A}_{k+1:n-1}\right) \over \partial x'} + {\partial(\bar{A}_{1:n-1} \otimes A_n) \over \partial x'}.
\end{equation}
Using that, and by applying commutation results from Section 3, we can write
\begin{align*}
{\partial \left(A_1 \otimes \ldots \otimes A_n\right) \over \partial x'}
=&	{\partial(A_1 \otimes \bar{A}_{2:n}) \over \partial x'}+	 \sum_{k=2}^{n-1} {\partial\left(\bar{A}_{1:k-1}\otimes A_k \otimes \bar{A}_{k+1:n}\right) \over \partial x'} + {\partial(\bar{A}_{1:n-1} \otimes A_n) \over \partial x'}.\\																																																														
=&	 \left(A_1 \otimes \bar{A}_{2:n}\right) \otimes {\partial \over \partial x'} + \sum_{k=2}^{n-1} \left(\bar{A}_{1:k-1}\otimes A_k \otimes \bar{A}_{k+1:n}\right) \otimes {\partial \over \partial x'}	 + \bar{A}_{1:n-1} \otimes {\partial A_n \over \partial x'}\\
 =& A_1 \otimes \left({\partial \over \partial x'}\otimes \bar{A}_{2:n}\right)K_{d,\prod_{j=2}^n s_j}  + \sum_{k=2}^{n-1}\bar{A}_{1:k-1}\otimes A_k \otimes \left({\partial \over \partial x'}		\otimes \bar{A}_{k+1:n}\right)K_{n_x,\prod_{j=k+1}^n s_j} \ldots \\
&  \ldots + A_{1:n-1} \otimes {\partial A_n \over \partial x'}\\
 =& A_1 I_{s_1} \otimes \left({\partial \over \partial x'}\otimes \bar{A}_{2:n}\right)K_{d,\prod_{j=2}^n s_j} \ldots \\
&\ldots + \sum_{k=2}^{n-1} \left(\bar{A}_{1:k-1}\otimes A_k\right) I_{\prod_{j = 1}^k s_j} \otimes \left({\partial \over \partial x'}		\otimes \bar{A}_{k+1:n}\right)K_{n_x,\prod_{j=k+1}^n s_j} \ldots \\
&  \ldots + A_{1:n-1} \otimes {\partial A_n \over \partial x'}\\
 =& \left({\partial A_1 \over \partial x'} \otimes \bar{A}_{2:n}\right)\left(I_{s_1} \otimes K_{n_x,\prod_{j=2}^n s_j}\right) \ldots \\
 &\ldots + \sum_{k=2}^{n-1} \left(\bar{A}_{1:k-1}\otimes {\partial A_k \over \partial x'} \otimes \bar{A}_{k+1:n}\right) \left(I_{\prod_{j = 1}^k s_j} \otimes K_{d,\prod_{j=k+1}^n s_j}\right) \ldots \\
&  + \ldots A_{1:n-1} \otimes {\partial A_n \over \partial x'}.
\end{align*}
\end{proof}
\paragraph{Chain Rule}
\begin{equation}
{\partial\left( F \circ G\right) \over \partial x'} = {\partial F \over \partial G'}\left(I_t \otimes {\partial G\over \partial x'}\right).
\end{equation}
If $F$ is a vector of size $s\times1$ this reduces to
\begin{equation}
{\partial\left( F \circ G\right) \over \partial x'} = {\partial F \over \partial G'}{\partial G\over \partial x'}.
\end{equation}

The differentiation rules above can also be expressed on their differential version:

\paragraph{Product Rule - Differential}
\begin{equation} \label{eq:diffproduct}
d FH = {\partial FH \over \partial x'}\left(I_q \otimes dx\right) = {\partial F  \over \partial x'}\left(H \otimes dx\right) + F {\partial H \over \partial x'}\left(I_q \otimes dx\right) .
\end{equation}

\paragraph{Product Rule - Kronecker Product - Differential}
\begin{equation}
d \left(F \otimes H\right) = {\partial\left( F \otimes H\right) \over \partial x'} \left(I_{tq} \otimes dx\right) = \left( F \otimes {\partial H  \over \partial x'} + \left({\partial F \over \partial x'} \otimes H\right) \left(I_t \otimes K_{n_x,q} \right)\right)\left(I_{tq} \otimes dx\right)
\end{equation}
\paragraph{Chain Rule - Differential}
\begin{align} \label{eq:diffchain}
d\left(F \circ G\right) &= {\partial\left( F \circ G\right) \over \partial x'}\left(I_t \otimes dx\right) \\
&= {\partial F \over \partial G'}\left(I_t \otimes {\partial G\over \partial x'}\right)\left(I_t \otimes dx\right) \\
&= {\partial F \over \partial G'}\left(I_t \otimes {\partial G\over \partial x'} dx\right).
\end{align}

\subsection*{B. Uniqueness and the Symmetrizer Matrix}

 For non-univariate function differentials the issue of non-uniqueness of the corresponding matrix or vector of partial derivatives arises.

 Given a matrix of partial derivatives of a vector function $f$ by $D_f$ we can write the k-th order differential using
 \begin{equation}
 d^k f = D_f \left(dx\right)^{\otimes k}.
 \end{equation}
 While the differential representation will be unique, the matrix of derivatives $D_f$ might not be.\footnote{See also \cite{chacon_higher_2021} for a general discussion of this issue.}

 To see this, note the following example.
 \begin{example}
 Consider the function
\begin{align}
f(x) &= {1 \over 2}a' x^{\otimes 2} \\
& = {1 \over 2} a_1 x_1^2 + \left(a_2 + a_3\right) x_1 x_2 +{1 \over 2} a_4 x_2^2
\end{align}
with $x=(x_1,x_2)'$ and $a = \left(a_1,a_2,a_3,a_4\right)'$. Writing down the second order differential explicitly provides
\begin{align}
d^2 f &=  a_1 d x_1^2 + \left(a_2+a_3 \right) dx_1 dx_2 + a_2 d x_2^2 \\
& = \left(a_1, {a_2+a_3\over 2}, {a_2+a_3\over 2}, a_4\right) (dx)^{\otimes 2}.
\end{align}
Using the second-order differential to identify the matrix of partial derivatives then leads us to write
\begin{equation}
{\partial^2 f \over \partial x'^{\otimes 2}} = \left(a_1, {a_2+a_3\over 2}, {a_2+a_3\over 2}, a_4\right).
\end{equation}
This is also exactly the same vector we would have gotten if would have taken partial derivatives of $f$ directly.
However, we can write the second order differential similarly by
\begin{align}
d^2 f &= \left(a_1, a_2, a_3, a_4\right) (dx)^{\otimes 2} \\
&=  a_1 d x_1^2 + \left(a_2+a_3 \right) dx_1 dx_2 + a_2 d x_2^2,
\end{align}
which would imply
\begin{equation}
{\partial^2 f \over \partial x'^{\otimes 2}} = \left(a_1, a_2, a_3, a_4\right).
\end{equation}
 \end{example}

We can make the decision to choose a unique matrix of derivatives by requiring that it fulfill some property of symmetry. Generally speaking, values of $D_f$ corresponding to the same component of the differential should be equal. For example, for a second order differential the values of $D_f$ that are multiplied with $dx_1 dx_2$ and $dx_2 dx_1$ should be identical. The symmetrizer matrix $S_{n,k}$ for vectors of size $n$ and Kronecker products with $k$ factors introduced by \cite{holmquist_direct_1985} achieves exactly this.
\begin{definition}
For the Kronecker product column vectors $x_1 \otimes x_2 \otimes \cdots \otimes x_m$ with $x_1,x_2,\ldots,x_m \in \R^n$, the symmetrizer matrix is given by the following property
\begin{equation}
S_{n,m} \left(x_1 \otimes x_2 \otimes \ldots \otimes x_m\right)  = {1 \over m!} \sum_{\sigma \in Sym(m)} x_{\sigma(1)} \otimes x_{\sigma(2)} \otimes \ldots \otimes x_{\sigma(m)}.
\end{equation}
\end{definition}

The symmetrizer matrix itself is symmetric. Therefore, for row vectors $x_1,x_2,\ldots,x_m \in \R^n$ we get
\begin{equation}
 \left(x_1 \otimes x_2 \otimes \ldots \otimes x_m\right) S_{n,m} = {1 \over m!} \sum_{\sigma \in Sym(m)} x_{\sigma(1)} \otimes x_{\sigma(2)} \otimes \ldots \otimes x_{\sigma(m)}.
\end{equation}
Using this definition, it then follows that
 \begin{equation}
 d^k f = D_f \left(dx\right)^{\otimes k}  = D_f S_{n,k} \left(dx\right)^{\otimes k},
 \end{equation}
 because $S_{n,k} \left(dx\right)^{\otimes k} = \left(dx\right)^{\otimes k}$. While the differential does not change using the transformation, the matrix of derivative is now given by $D_f S_{n,k}$.

\begin{example}
In our example above, we can derive the symmetrizer matrix in the following way:
For vectors $x, y$ of size $d \times 1$ the symmetrizer fulfills the following property
\begin{equation}
S_{d,2}\left(x \otimes y\right) = {x \otimes y + y \otimes x\over 2}
\end{equation}
for all vectors $x$ and $y$.
We know that there exists a commutation matrix $K_{d,d}$ such that $K_{d,d}\left(x \otimes y\right) = y \otimes x$.
Therefore we can write
\begin{align}
S_{d,2}\left(x \otimes y\right) &= I_{d^2}{x \otimes y \over 2} + K_{d,d}{x \otimes y\over 2} \\
																												&={1 \over 2}\left(I_{d^2} + K_{d,d}\right)\left(x \otimes y\right).
\end{align}
Since this holds for arbitrary choices of $x$ and $y$ it follows that
\begin{equation}
S_{d,2} = {1 \over 2}\left(I_{d^2} + K_{d,d}\right).
\end{equation}

For $d = 2$ this becomes
\begin{equation}
S_{2,2} = \begin{pmatrix}
1\; 0 \; 0 \; 0 \\
0\; {1 \over 2}\; {1 \over 2}\; 0\\
0\; {1 \over 2}\; {1 \over 2}\; 0\\
0\; 0\; 0\; 1
\end{pmatrix} = {1 \over 2}\left(I_2 + K_{2,2}\right).
\end{equation}
Thusly, we can write
\begin{align}
 																																				d^2 f			&= \left(a_1, a_2, s_3, a_4\right) S_{2,2}  (dx)^{\otimes 2} \\
                                                                 &= \left(a_1, {a_2+a_3\over 2}, {a_2+a_3\over 2}, a_4\right) S_{2,2}  (dx)^{\otimes 2}\\
                                                                 &={\partial^2 f \over \partial x'^{\otimes 2}}  (dx)^{\otimes 2}.
\end{align}

\end{example}

\paragraph{Symmetrizing Faà Di Bruno}
Similarly, for our generalization of Faà Di Bruno's formula, this implies that the differential representation
\begin{align}
d^{n} f(g(x))
= \sum_{k=1}^{n} f_{g^{k}}(g(x)) \textbf{B}^g_{n,k}  (dx)^{\otimes n},
\end{align}
implies a representation of the matrix of derivatives given by
\begin{equation}
{\partial^n (f \circ g) \over \partial (x')^{\otimes n}}  = \sum_{k=1}^{n} f_{g^{k}}(g(x)) \textbf{B}^g_{n,k} S_{n_x,n}.
\end{equation}
instead of
\begin{equation}
{\partial^n (f \circ g) \over \partial (x')^{\otimes n}} = \sum_{k=1}^{n} f_{g^{k}}(g(x)) \textbf{B}^g_{n,k} .
\end{equation}

\end{document}